\let\comp=\circ
\let\wtilde=\widetilde
\let\what=\widehat
\newtheorem{theorem}{Theorem}
\newtheorem{lemma}[theorem]{Lemma}
\newtheorem{corollary}[theorem]{Corollary}
\newtheorem{fact}[theorem]{Fact}
\theoremstyle{definition}
\newtheorem{definition}[theorem]{Definition}
\newtheorem{problem}[theorem]{Problem}
\newcommand\setsep{;\allowbreak\ } %separator in set definition
\newcommand\absb[2]{\csname#1l\endcsname|#2\csname#1r\endcsname|}
\newcommand\norm[1]{\mathopen\|#1\mathclose\|}
\newcommand\normb[2]{\csname#1l\endcsname\|#2\csname#1r\endcsname\|}
\newcommand\al{\alpha}
\newcommand\de{\delta}
\newcommand\ve{\varepsilon}
\newcommand\ga{\gamma}
\newcommand\om{\omega}
\newcommand\Ga{\Gamma}
\newcommand\Om{\Omega}
\newcommand\sg{\sigma}
\newcommand\N{{\mathbb N}}
\newcommand\R{{\mathbb R}}
\newcommand\cntum{{\mathfrak c}}
\newcommand\restr[1]{\mathclose\restriction_{#1}}
\newcommand\mc{\mathcal}
\DeclareMathOperator{\diam}{diam}
\DeclareMathOperator{\card}{card}
\DeclareMathOperator{\dens}{dens}
\DeclareMathOperator{\pc}{pc}
\DeclareMathOperator{\rpc}{rpc}
\DeclareMathOperator{\ord}{ord}
\DeclareMathOperator{\cf}{cf}
\DeclareMathOperator{\cspan}{\overline{span}}
\DeclareMathOperator{\spn}{span}
\DeclareMathOperator{\supp}{supp}
\begin{document}
\title[Point character of Banach spaces  and non-linear embeddings into~$c_0(\Ga)$]{Remarks on the point character of Banach spaces and non-linear embeddings into~$c_0(\Ga)$}
\author{Petr Hájek}
\address{Department of Mathematics\\Faculty of Electrical Engineering, Czech Technical University in Prague\\Technická 2\\166~27 Praha~6\\Czech Republic}
\email{hajekpe8@fel.cvut.cz}
\author{Michal Johanis}
\address{Charles University, Faculty of Mathematics and Physics\\Department of Mathematical Analysis\\Sokolovská~83\\186~75 Praha~8\\Czech Republic}
\email{johanis@karlin.mff.cuni.cz}
\author{Thomas Schlumprecht}
\address{Department of Mathematics, Texas A\&M University\\College Station, TX 77843, USA\\and Department of Mathematics\\Faculty of Electrical Engineering, Czech Technical University in Prague\\Technická 2\\166~27 Praha~6\\Czech Republic}
\email{t-schlumprecht@tamu.edu}
\date{November 2023}
\keywords{point character, uniform embeddings into $c_0(\Ga)$}
\subjclass{46B25, 46B26, 46B80, 54E15, 54F45}
\thanks{The work was supported in part by GA23-04776S, by grant SGS22/053/OHK3/1T/13 of CTU in Prague, and the grant DMS-2054443 from the National Science Foundation.}
\dedicatory{This work is dedicated to Gilles Godefroy.}
\begin{abstract}
We give a brief survey of the results on coarse or uniform embeddings of Banach spaces into $c_0(\Ga)$ and the point character of Banach spaces.
In the process we prove several new results in this direction (for example we determine the point character of the spaces $L_p(\mu)$, $1\le p\le2$)
solving open problems posed by C.~Avart, P.~Komjáth, and V.~Rödl and by G.~Godefroy, G.~Lancien, and V.~Zizler.
In particular, we show that $X=L_p(\mu)$, $1\le p<\infty$, bi-Lipschitz embeds into $c_0(\Ga)$ if and only if $\dens X<\om_\om$.
\end{abstract}

\maketitle

The Banach space $c_0$ plays a fundamental role both in the linear and non-linear structural theory of Banach spaces.
Indeed, by the famous result of Mordecay Zippin \cite{Zip} it is the unique separable Banach space which is linearly complemented in every separable superspace.
On the other hand, a celebrated result in non-linear functional analysis, due to Israel Aharoni (Theorem~\ref{t:embed-c0} below), claims that every separable metric space admits a bi-Lipschitz embedding into the Banach space $c_0$.
It is apparently unknown if the latter property also admits a converse statement, namely if a Banach space that contains a bi-Lipschitz copy of $c_0$ also contains a linear copy thereof.

Our present note is focusing on embeddings into the non-separable version of $c_0$, namely the space $c_0(\Ga)$.
We are going to survey some known results and prove several new theorems.
The main body of work in this area is due to Jan Pelant and Vojtěch Rödl, and their coauthors.
These researchers were originally motivated by studying the covering properties of metric (or uniform) spaces,
and the connection with the uniform embeddings into $c_0(\Ga)$ was discovered somewhat later by J.~Pelant (Theorem~\ref{t:rpc-embed} below).
In fact, for normed linear spaces the existence of uniform embeddings is equivalent to the existence of bi-Lipschitz ones (this was known to J.~Pelant) as well as the coarse ones (a result of Andrew Swift).
One of our main new contributions in this note is contained in Theorem~\ref{t:rpc-ball}, which implies in particular that the embeddability of a normed linear space into $c_0(\Ga)$ is equivalent to the embeddability of its unit ball.
Theorem~\ref{t:embedc0-char} summarises known equivalent conditions for a normed linear space to be uniformly embeddable into $c_0(\Ga)$.
In Theorem~\ref{t:cotype-rpc-lim}, resp. Corollary~\ref{c:cotype-no_emb} we improve the results of \cite{PR92} and \cite{HS18}, showing that density $\om_\om$ and larger is an obstacle for embeddings into $c_0(\Ga)$ for all normed linear spaces of a non-trivial cotype.

The rest of the paper depends to a large extent on the characterisation of those sets $\Lambda$, for which $\ell_1(\Lambda)$ embeds into some $c_0(\Ga)$, which was obtained in \cite{PR92} and \cite{AKR13} (Theorems~\ref{t:rpc-lp-lim} and~\ref{t:rpc-l1}).
Namely, such embeddings exist if and only if $\card\Lambda<\omega_\omega$.
Using the uniform equivalence of unit balls of certain Banach lattices (Theorems~\ref{t:cotype_basis-embed_l1} and \ref{t:cotype_lat-embed_l1} below; in particular spaces of non-trivial cotype with an unconditional basis)
together with the fundamental embeddability result and Theorem~\ref{t:embedc0-char} we prove in Corollary~\ref{c:Lp,lat-embed} that several classes of Banach spaces of a non-trivial cotype and density less than $\om_\om$ embed into $c_0(\Ga)$.
The result holds true in particular for spaces $L_p(\mu)$, $1\le p<\infty$.
Finally, in Corollary~\ref{c:Lip-ap} we apply our results to the theory of approximations of Lipschitz mappings by smooth Lipschitz mapping in case when the range space is an absolute Lipschitz retract (e.g. any separable $C(K)$-space).

Let us now pass to the technical part of our note.
Let $X$ be a metric space.
By $U(x,r)$, resp. $B(x,r)$, resp. $S(x,r)$ we denote the open ball, resp. closed ball, resp. sphere centred at~$x\in X$ with radius~$r>0$.
If it is necessary to distinguish the metric space in which the ball is taken we use the notation $U_X(x,r)$ etc.
We begin by introducing several important concepts regarding uniformity properties of non-linear mappings between metric spaces and the related non-linear embeddings.

\begin{definition}
Let $(X,\rho)$ and $(Y,\sg)$ be metric spaces and $f\colon X\to Y$.
We say that $f$ is of
\begin{itemize}
\item bounded expansion if for every $d>0$ there exists $K\ge0$ such that $\sg\bigl(f(x),f(y)\bigr)\le K$ whenever $x,y\in X$ are such that $\rho(x,y)\le d$;
\item bounded compression if for every $K>0$ there exists $d>0$ such that $\sg\bigl(f(x),f(y)\bigr)\ge K$ whenever $x,y\in X$ are such that $\rho(x,y)\ge d$.
\end{itemize}
The mapping $f$ is called a \emph{coarse embedding} if it is both of bounded expansion and bounded compression.
It is called a \emph{uniform embedding} if it is one-to-one and both $f$ and $f^{-1}$ are uniformly continuous.
It is called a \emph{bi-Lipschitz embedding} if it is one-to-one and both $f$ and $f^{-1}$ are Lipschitz.
\end{definition}

Note that a coarse embedding may not be an embedding (i.e. one-to-one) and if $f$ is one-to-one, then it is a coarse embedding if and only if both $f$ and $f^{-1}$ are of bounded expansion.
Further, the notion of coarse embedding is non-trivial only for unbounded metric spaces.

The theory of Lipschitz mappings between separable Banach spaces, which is based on differentiability concepts, has been extensively developed by many authors (see \cite{BenLin}).
In particular, if $X, Y$ are separable Banach spaces and $Y$ has the RNP,
then the existence of a bi-Lipschitz embedding of $X$ into $Y$ implies that $X$ is linearly isomorphic to a subspace of $Y$, \cite{HM82}.
However, the space $c_0$ fails the RNP, so this theory cannot be applied in the situation of embeddings into $c_0$ (or $c_0(\Ga)$).
Instead, we have a fundamental result due to I.~Aharoni:

\begin{theorem}\cite{A77}\label{t:embed-c0}
Every separable metric space admits a bi-Lipschitz embedding into $c_0^+$ (the non-negative cone of $c_0$).
\end{theorem}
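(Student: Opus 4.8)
The plan is to build the embedding explicitly from scaled ``cone'' functions and to reduce the problem to a countable dense set. First I would observe that it suffices to embed a countable dense subset $D=\{x_n\}_{n\in\N}$ of $X$: since $c_0^+$ is a closed subset of the complete space $c_0$, any Lipschitz map $f\colon D\to c_0^+$ extends to a Lipschitz map $\bar f\colon X\to c_0^+$ with the same upper constant, and a lower bound $\norm{f(u)-f(v)}\ge a\rho(u,v)$ valid on $D$ passes to all of $X$ by continuity (in particular $\bar f$ is then automatically injective). Thus from now on $X$ may be assumed countable.

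The basic building blocks are the non-negative $1$-Lipschitz functions $x\mapsto\max\bigl(0,r-\rho(x,p)\bigr)$ attached to a centre $p\in D$ and a radius $r>0$; non-negativity is what places the image in $c_0^+$. When $p$ is chosen within $\tfrac14\rho(x,y)$ of $x$ and $r\approx\tfrac12\rho(x,y)$, such a function takes a value comparable to $\rho(x,y)$ at $x$ and the value $0$ at $y$. Working at the dyadic scales $r_m=2^{-m}$, $m\in\mathbb Z$, the family of all these functions therefore already delivers the upper estimate (each coordinate is $1$-Lipschitz, so the supremum norm of $f(x)-f(y)$ is at most $\rho(x,y)$) and, through a coordinate at the scale nearest to $\rho(x,y)$, a matching lower estimate $\norm{f(x)-f(y)}\ge c\,\rho(x,y)$. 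So the bi-Lipschitz inequalities are essentially for free.

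The real difficulty, and the step I expect to be the main obstacle, is forcing the image to lie in $c_0$ rather than merely in $\ell_\infty^+$. For a fixed $x$ the coordinates coming from scale $r_m$ are all bounded by $r_m=2^{-m}$, so only finitely many scales can contribute a value exceeding a prescribed $\ve>0$; but within a single scale infinitely many centres $p\in D$ may lie within $r_m$ of $x$ (this genuinely occurs in infinite-dimensional spaces, e.g.\ on a sphere in $\ell_2$), and then $f(x)\notin c_0$. To repair this I would not use all centres at each scale. Instead, at every scale $m$ I would replace the ball cover $\{U(p,r_m):p\in D\}$ by a $\sigma$-discrete open refinement $\bigcup_j\mc V_{m,j}$ (these exist for every metric space by the Stone/paracompactness theory, and separability makes each family $\mc V_{m,j}$ countable), and attach to each member $V$ the non-negative $1$-Lipschitz function $x\mapsto\rho(x,X\setminus V)$, which is supported in~$V$. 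Since each $\mc V_{m,j}$ is discrete, at most one of these functions is non-zero at any given point, so over the countably many pairs $(m,j)$ the supports are point-finite and every coordinate sequence of $f(x)$ decays to~$0$. This covering-theoretic input is exactly what ties the argument to the point-character circle of ideas.

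The remaining work is to reconcile two competing demands and then do the bookkeeping. Point-finiteness pushes the refinement toward small, well-separated pieces, whereas the lower estimate requires that for each pair $x,y$ some surviving function, at the scale comparable to $\rho(x,y)$, have $x$ sitting \emph{deep} inside a member $V$ while $y\notin V$; a careless refinement can cut the boundary of $V$ right next to $x$ and destroy the separation. Arranging a refinement that is simultaneously $\sigma$-discrete and ``fat'' around each point, and then weighting the successive families $(m,j)$ so that the resulting vector has finite supremum norm, still decays, and admits a single pair of constants $0<a\le b$ valid at all scales, is where the estimates must be carried out with care. No idea beyond the discreteness of the refinement and the standard cone functions is needed, but balancing the constants across the infinitely many scales is the genuinely technical part.
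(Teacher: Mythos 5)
Your reduction to a countable dense set and the upper Lipschitz estimate are correct, and you have located the crux accurately; but the crux is where the argument genuinely fails, and not for bookkeeping reasons. First, $\sigma$-discreteness yields at most one non-zero coordinate \emph{per family} $\mc V_{m,j}$, i.e.\ a point-countable system, not a point-finite one, and no decay whatsoever: nothing prevents a fixed $x$ from lying, at one and the same scale $m$, in members of infinitely many families $\mc V_{m,j}$, $j\in\N$, with depth $\rho(x,X\setminus V)$ comparable to $r_m$. Worse, the very ``fatness'' you must keep for the lower estimate (every point $s_m$-deep in some member, $s_m$ comparable to $r_m$) produces at every coarse scale $m\to-\infty$ a coordinate of size at least $s_m\to\infty$, so with unit weights your map is not even $\ell_\infty$-valued. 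Weights are therefore mandatory, and here the scheme breaks down: a pair $x,y$ with $\rho(x,y)=t$ cannot be served at scales much smaller than $t$ (those coordinates are bounded by $2\lambda r_m\ll t$), and at coarse scales both points generically sit deep inside the \emph{same} member, so in general the serving coordinate lives at scale comparable to $t$ and must carry a weight bounded below by a universal constant. But the family index $j$ of the member in which $x$ sits $ct$-deep cannot be bounded: if at scale $r$ every point were $cr$-deep in a member of the first $N$ (pairwise disjoint) families, then intersecting the refinement with an isometric copy of an $n$-dimensional cube --- present already in the separable spaces $\ell_2$ or $c_0$ --- would give a cover of the cube of multiplicity at most $N$ by sets of diameter at most $2r$, and the Lebesgue covering theorem forces $N\ge n+1$. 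Hence in any infinite-dimensional separable space the index $j$ is unbounded: weights decaying in $j$ degrade the lower constant to $0$, while weights bounded below in $j$ put the image outside $c_0$. Your closing claim that ``no idea beyond the discreteness of the refinement and the standard cone functions is needed'' is thus precisely what is false.

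This trade-off is not an artifact of your write-up; it is exactly why the covering-theoretic route yields only \emph{uniform} embeddings: Pelant's Theorem~\ref{t:rpc-embed} converts point-finite uniform refinements into a uniform embedding, and the paper records right after that theorem that in general such an embedding cannot be made bi-Lipschitz \cite[Corollary~4.4]{P94}. (Note also that the paper does not prove the statement under review --- it quotes \cite{A77} --- so the comparison must be with the literature.) All known proofs (Aharoni's original \cite{A77}, Pelant's optimal-constant-$3$ proof \cite{P94}, the constant-$2$ results in \cite{KL08}) contain an ingredient absent from your sketch: the enumeration of the countable dense sequence is used as a \emph{priority ordering}, and the coordinate functions are built with explicit penalty or truncation terms so that, at a given scale, only the first admissible centre near $x$ produces a large coordinate, at full weight, while all later centres are suppressed by the formula itself (unboundedness of the space being tamed by normalising against a basepoint). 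In this way $c_0$-decay and the uniform lower bound are achieved simultaneously rather than traded against each other; replacing your $\sigma$-discrete refinement step by such a priority mechanism is the missing idea, and it is also the reason the result is a genuinely separable phenomenon rather than an instance of the general covering theory.
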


J.~Pelant \cite{P94} has found the optimal bi-Lipschitz constant in the above result to be $3$ (see also \cite{KL08}).

Our focus will be on the non-separable version of the Aharoni theorem, i.e. embeddings (uniform, coarse, or bi-Lipschitz) of Banach spaces into $c_0(\Ga)$.
This problem, in the more general setting of embedding metric, or even uniform spaces, has an interesting history.
In our note we will restrict our attention (i.e. we will introduce and treat the relevant concepts) to the metric case.

We will be using topological approach introduced by J.~Pelant, which appears unrelated at a first glance.
A covering of a set $A$ is a collection $\mc U$ of subsets of $A$ such that $A=\bigcup_{U\in\mc U}U$.
A covering $\mc U$ is called point-finite if for every $x\in A$ the set $\{U\in\mc U\setsep x\in U\}$ is finite.
A covering $\mc V$ of $A$ is called a refinement of a covering $\mc U$ if for every $V\in\mc V$ there exists a $U\in\mc U$ such that $V\subset U$.
By the well-known Stone theorem \cite{St48} every metric space $X$ is paracompact, i.e. every open covering of $X$ has a locally finite open refinement
(in particular every open covering has a point-finite refinement).

A uniform version of the latter property can be formulated as follows.
Let $X$ be a metric space.
By $\mc U(r)=\{U(x,r)\setsep x\in X\}$ we denote the full uniform covering of $X$.
A covering $\mc U$ of $X$ is called uniform if there exists an $r>0$ such $\mc U(r)$ refines $\mc U$ (in such case we say that $\mc U$ is $r$-uniform).
It is called uniformly bounded if there is $R>0$ such that $\diam U\le R$ for every $U\in\mc U$ (in such case we say that $\mc U$ is $R$-bounded).

\begin{definition}[\cite{PHK06}]
A metric space $X$ is said to have the {uniform Stone property} if every uniform covering of $X$ has a point-finite uniform refinement.
\end{definition}

\begin{definition}[\cite{S18}]
A metric space $X$ is said to have the {coarse Stone property} if every uniformly bounded covering of $X$ is a refinement of a point-finite uniformly bounded covering of~$X$.
\end{definition}

Arthur H. Stone has asked (see \cite{P77}) if a uniform version of the Stone theorem holds,
i.e. if each uniform covering of a metric space has a locally finite uniform refinement (or, equivalently, a point-finite uniform refinement, see e.g. \cite{I64}).
In fact, the problem was posed in the setting of uniform spaces which is more general; we will not introduce the concept of uniform spaces in our note.
The problem was solved in the negative independently by Evgeniy V. Shchepin \cite{S75} and J.~Pelant \cite{P75}.
E.~Shchepin proved that the Banach space $\ell_\infty(\Ga)$ fails the uniform Stone property whenever $\card\Ga>\exp_\om(\om)$.
The proof in \cite{P75} implies, in the so-called Baumgartner model of ZFC,
that for any cardinal $\kappa\ge2^{\om_1}$ the Banach space $\ell_\infty(\kappa)$ does not have the uniform Stone property.
These results have prompted the definition of the point character of a metric/uniform space (\cite{P75}, \cite{P77}, \cite{R87}, \cite{PR92}, \cite{P94}):

\begin{definition}
For a collection $\mc E$ of subsets of some set its order is defined by
\begin{itemize}
\item $\ord\mc E=n$ if $n=\max\bigl\{\card\mc D\setsep\mc D\subset\mc E,\bigcap\mc D\neq\emptyset\bigr\}\in\N$,
\item $\ord\mc E=\al$ if $\al=\sup\left\{(\card\mc D)^+\setsep\mc D\subset\mc E,\bigcap\mc D\neq\emptyset\right\}$ is an infinite cardinal.
\end{itemize}
\end{definition}

\begin{definition}
Let $X$ be a metric space.
Its point character $\pc X$ is the least cardinal $\al$ such that every uniform covering of~$X$ has a uniform refinement $\mc V$ with $\ord\mc V<\al$.
\end{definition}

It is important to note that the definition of the point character in the literature varies and some authors (usually those interested in combinatorics rather than topology)
use the following definition (which we will call the reduced point character):
\begin{definition}
Let $X$ be a metric space.
Its reduced point character $\rpc X$ is the least cardinal $\al$ such that every uniform covering of $X$ has a uniform refinement $\mc V$ with $\ord\mc V\le\al$.
\end{definition}

The difference between $\pc$ and $\rpc$ is that the cardinal $\pc$ can differentiate between certain situations occurring at limit cardinals.
For example assume that $X$ is such that $\rpc X=\om$.
Then either for each uniform covering of $X$ there is a uniform refinement with a finite order, but for different coverings this order needs to be arbitrarily high.
In this case $\pc X=\om$.
Or there is a uniform covering of $X$ that has no uniform refinement of a finite order (but has a uniform refinement of order $\om$; note that this refinement is still point-finite).
In this case $\pc X=\om_1$.
On the other hand if $\rpc X$ is a successor cardinal, then $\pc X=(\rpc X)^+$.

Note however that if $X$ is a normed linear space, then thanks to homogeneity the former situation described above cannot happen and so always $\pc X=(\rpc X)^+$.
Since we are primarily interested in normed linear spaces rather than general metric spaces, we will use the cardinal $\rpc$, which leads to more canonical formulas.

It is clear that a metric space $X$ has the uniform Stone property if and only if $\rpc X\le\om$ (if and only if $\pc X\le\om_1$).
The following crucial result of J. Pelant provides a link between the embeddings into $c_0(\Ga)$ and the uniform covering properties.
\begin{theorem}[{\cite[Corollary~2.4]{P94}}]\label{t:rpc-embed}
A metric space $X$ satisfies $\rpc X\le\om$ if and only if it admits a uniform embedding into $c_0(\Ga)$, where $\card\Ga\le\dens X$.
\end{theorem}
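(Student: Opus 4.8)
The plan is to prove both implications, the non-trivial work being split between an explicit embedding (for ``$\rpc X\le\om\implies$ embeddability'') and a pull-back argument (for the converse) that reduces everything to one covering property of $c_0(\Ga)$ itself. For the forward direction, I would, for each $n\in\N$, apply the hypothesis $\rpc X\le\om$ to the full uniform covering $\mc U(2^{-n})$ to obtain a point-finite uniform refinement $\mc V_n$, say $r_n$-uniform. One may assume every $V\in\mc V_n$ has nonempty interior (discard the rest; the balls $U(x,r_n)$ still witness covering), and then $\card\mc V_n\le\dens X$ automatically, since a point-finite family of sets with nonempty interior in a metric space has cardinality at most the density (each such set meets a fixed dense set, finitely-to-one). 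Put $\Ga=\bigsqcup_n\mc V_n$, so $\card\Ga\le\om\cdot\dens X=\dens X$, and define $f\colon X\to\R^\Ga$ by $f(x)_V=\min\bigl(\operatorname{dist}(x,X\setminus V),2^{-n+1}\bigr)$ for $V\in\mc V_n$.

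Next I would verify that $f$ is the desired embedding into $c_0(\Ga)$. Since $V\subset U(x_0,2^{-n})$ for some $x_0$, we have $\diam V\le2^{-n+1}$, so the level-$n$ coordinates are all bounded by $2^{-n+1}$ and each $f(\cdot)_V$ vanishes off $V$; by point-finiteness of $\mc V_n$ only finitely many level-$n$ coordinates are nonzero at a given $x$, whence $\{V\setsep f(x)_V>\ve\}$ is finite for every $\ve>0$ and $f(x)\in c_0(\Ga)$. Each coordinate is $1$-Lipschitz, giving $\norm{f(x)-f(y)}_\infty\le\rho(x,y)$, so $f$ is Lipschitz. For the lower estimate, given $\rho(x,y)=d>0$ choose $n$ with $2^{-n+1}<d$; the $r_n$-uniformity yields $V\in\mc V_n$ with $U(x,r_n)\subset V$, whence $f(x)_V\ge r_n$, while $\diam V\le2^{-n+1}<d$ forces $y\notin V$ and $f(y)_V=0$. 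Thus $\norm{f(x)-f(y)}_\infty\ge r_n>0$, which simultaneously gives injectivity and, as $r_n$ depends only on $d$, uniform continuity of $f^{-1}$.

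For the converse, assume a uniform embedding $f\colon X\to c_0(\Ga)$. The key covering fact I would establish is that for every $s>0$ the space $c_0(\Ga)$ admits a point-finite uniform covering $\mc W$ all of whose members have diameter $<s$ (equivalently $\rpc c_0(\Ga)\le\om$). Granting this, fix a uniform covering $\mc U$ of $X$, refined by some $\mc U(r)$; using the modulus of uniform continuity of $f^{-1}$, choose $s$ so small that $f^{-1}(W)$ has diameter $<r$ whenever $\diam W<s$, and set $\mc V=\{f^{-1}(W)\setsep W\in\mc W\}$. Then $\mc V$ is point-finite because $f$ is injective and $\mc W$ is point-finite; it is uniform because uniform continuity of $f$ carries a sufficiently small ball $U(x,\rho_0)$ into the member of $\mc W$ containing $U(f(x),s')$ (where $s'$ is the uniformity constant of $\mc W$), so $U(x,\rho_0)\subset f^{-1}(W)$; and it refines $\mc U$ because each $f^{-1}(W)$ has diameter $<r$, hence lies in a member of $\mc U(r)$ and thus of $\mc U$. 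This gives $\rpc X\le\om$.

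The main obstacle is the covering fact for $c_0(\Ga)$. The naive attempt---split the coordinates into a finite ``large'' set $F$ and a ``small'' rest, discretise $x\restr{F}$ on an $\ve$-grid, and demand $\abs{x(\ga)}$ small for $\ga\notin F$---does produce a point-finite family (a fixed $x$ has finite support above any threshold, which pins down $F$ and the admissible grid values) of small diameter that covers; but it is \emph{not} uniform, because coordinates whose value lies near the threshold separating ``large'' from ``small'' make the assignment $x\mapsto(F,\text{grid})$ unstable, so no ball around such an $x$ fits inside a single member. Removing this boundary instability---via a multi-scale/staircase treatment of the threshold, which is precisely Pelant's device---is the technical heart of the statement; once it is available, both the explicit embedding and the pull-back go through as above.
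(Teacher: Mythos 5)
Your overall architecture is sound, and it is worth noting that the paper itself offers no proof of this statement --- it cites \cite[Corollary~2.4]{P94} --- so your attempt has to stand entirely on its own. The forward direction as you give it is complete and correct, and is essentially Pelant's construction: the refinements $\mc V_n$ of $\mc U(2^{-n})$, the cardinality bound via members with nonempty interior meeting a fixed dense set finitely-to-one, and the map $f(x)_V=\min\bigl(\operatorname{dist}(x,X\setminus V),2^{-n+1}\bigr)$ all work as claimed; the only (trivial) correction is that $f(x)_V\ge\min(r_n,2^{-n+1})$ rather than $\ge r_n$, so you should normalise $r_n\le2^{-n+1}$ at the outset (harmless, since shrinking $r_n$ preserves $r_n$-uniformity). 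The pull-back argument in the converse is also correct as far as it goes --- though it duplicates Fact~\ref{f:rpc_embedded}: a uniform embedding is a uniform homeomorphism onto its image, so $\rpc X=\rpc f(X)\le\rpc c_0(\Ga)$ follows directly from invariance and subspace monotonicity.

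The genuine gap is the one you flag yourself: the claim that $\rpc c_0(\Ga)\le\om$, i.e.\ that for every $s>0$ the space $c_0(\Ga)$ admits a point-finite uniform covering by sets of diameter less than $s$, is asserted but never proved. This cannot be routed around: since the identity map uniformly embeds $c_0(\Ga)$ into itself, the implication ``embeddable $\Rightarrow\rpc\le\om$'' is formally equivalent to this covering lemma, so it is not an auxiliary fact but the entire content of that direction of the theorem. Your diagnosis of why the naive discretisation fails (instability of the assignment $x\mapsto(F,\text{grid})$ for coordinates near the large/small threshold) is accurate, but naming the remedy --- ``a multi-scale/staircase treatment of the threshold, which is precisely Pelant's device'' --- is a citation of the missing idea, not an execution of it. Carrying it out requires defining the covering with thresholds that vary with the size of the distinguished finite coordinate set (so that a coordinate sitting near the boundary at one scale lies safely in the interior at a neighbouring scale), and then verifying all three properties --- covering, uniformity, and point-finiteness --- which is a nontrivial combinatorial argument of about a page and is exactly the technical heart of Pelant's lemma. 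As written, your proposal proves ``$\rpc X\le\om\Rightarrow$ embeddable'' in full, reduces the converse correctly, but leaves the converse's essential lemma unproven; to be complete it would need either a full proof of that lemma or an explicit appeal to \cite{P94} for it.
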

In general the embedding cannot be bi-Lipschitz, \cite[Corollary~4.4]{P94}.

We begin our investigation by observing several simple facts concerning the point character.
It is useful to notice that since the notion of being a refinement is transitive and in normed linear spaces we can use homogeneity, it is possible to reformulate the definitions of point character in the following way:
\begin{fact}
Let $X$ be a metric space.
\begin{itemize}
\item $\pc X$ is the least cardinal $\al$ such that for every $r>0$ the covering $\mc U(r)$ of $X$ has a uniform refinement $\mc V$ with $\ord\mc V<\al$.
\item $\pc X$ is the least cardinal $\al$ such that for every $r>0$ there exists an $r$-bounded uniform covering $\mc U$ of $X$ with $\ord\mc U<\al$.
\end{itemize}
If $X$ is a normed linear space, then
\begin{itemize}
\item $\pc X$ is the least cardinal $\al$ such that for some $r>0$ the covering $\mc U(r)$ of $X$ has a uniform refinement $\mc V$ with $\ord\mc V<\al$.
\item $\pc X$ is the least cardinal $\al$ such that there exists a bounded uniform covering $\mc U$ of $X$ with $\ord\mc U<\al$.
\end{itemize}

Analogous statements hold for the cardinal $\rpc$.
\end{fact}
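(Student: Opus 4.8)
The plan is to show that in each reformulation the displayed ``least cardinal'' coincides with $\pc X$ from the definition; the $\rpc$ statements are word-for-word identical with $<$ replaced by $\le$, so I would treat only $\pc$. For the first bullet, note that each full uniform covering $\mc U(r)$ is itself uniform (it trivially refines itself, hence is $r$-uniform), so restricting the defining quantifier to the subfamily $\{\mc U(r)\setsep r>0\}$ asks for less and thus cannot increase the least cardinal. For the reverse inequality I would use that, by definition, every uniform covering $\mc U$ is refined by some $\mc U(r)$, together with transitivity of refinement: a uniform refinement $\mc V$ of $\mc U(r)$ with $\ord\mc V<\al$ is then also a uniform refinement of $\mc U$. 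This yields the first bullet.

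For the second bullet I would pass between uniform refinements of $\mc U(r)$ and $r$-bounded uniform coverings via the elementary relationship between diameter and ball-containment, applied with the first bullet. In one direction, any uniform refinement $\mc V$ of $\mc U(r/2)$ has each member contained in a ball of radius $r/2$, hence of diameter at most $r$, so $\mc V$ is an $r$-bounded uniform covering of the same order. In the other direction, given an $(r/2)$-bounded uniform covering $\mc U$, I pick for each nonempty $U\in\mc U$ a point $a\in U$; then every $x\in U$ satisfies $\norm{x-a}\le\diam U\le r/2<r$, so $U\subset U(a,r)$, whence $\mc U$ refines $\mc U(r)$ and is a uniform refinement of $\mc U(r)$ of the same order.

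For the two normed-space bullets, homogeneity enters through the map $T_\lambda(x)=\lambda x$ for $\lambda>0$: it is a bijective metric similarity with factor $\lambda$, it maps $\mc U(r)$ onto $\mc U(\lambda r)$, it preserves the order of any family (since $\ord$ depends only on cardinalities of subfamilies with nonempty intersection, and these are invariant under a bijection), and it sends $s$-uniform coverings to $\lambda s$-uniform ones. Consequently a uniform refinement of a single $\mc U(r_0)$ of order $<\al$ transports under $T_{r/r_0}$ to a uniform refinement of $\mc U(r)$ of the same order for every $r>0$, so ``for some $r$'' and ``for every $r$'' determine the same least cardinal; this gives the third bullet from the first. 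The same map turns one bounded (say $R$-bounded) uniform covering of order $<\al$ into an $r$-bounded one of the same order for each prescribed $r$, by taking $\lambda=r/R$, which yields the fourth bullet from the second.

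The soft ingredients here are genuinely routine: transitivity of refinement, and the bijection-invariance of order and uniformity. The only place that demands care is the second bullet, where one must respect the factor $2$ relating diameter to radius and honour the open/closed ball distinction, choosing an $(r/2)$-bounded covering precisely so that its members fit inside \emph{open} balls of radius $r$. I expect this bookkeeping, rather than any conceptual difficulty, to be the main (and rather minor) obstacle.
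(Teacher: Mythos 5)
Your proposal is correct and follows exactly the route the paper intends: the paper states this Fact without a detailed proof, justifying it in one sentence by transitivity of refinement and homogeneity (dilations) in normed linear spaces, which is precisely the skeleton you flesh out. Your bookkeeping on the diameter--radius factor of $2$ and the reduction of each bullet to the previous one are all sound, so nothing is missing.
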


We will use the above reformulations freely without mention.

\begin{fact}\label{f:rpc_embedded}
Let $X$, $Y$ be metric spaces.
\begin{enumerate}[(a)]
\item If $Y$ is uniformly homeomorphic to $X$, then $\pc Y=\pc X$ and $\rpc Y=\rpc X$.
\item If $Y\subset X$, then $\pc Y\le\pc X$ and $\rpc Y\le\rpc X$.
\end{enumerate}
\end{fact}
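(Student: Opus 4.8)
The plan is to verify that both invariants are transported correctly by the two operations. The whole argument rests on three elementary observations: a uniformly continuous surjection pulls uniform coverings back to uniform coverings and turns refinements into refinements; a bijection leaves the order of a family unchanged; and restricting a covering to a subset cannot raise its order. Throughout I would freely use the reformulations of $\pc$ and $\rpc$ recorded in the preceding Fact.

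For (a), let $h\colon X\to Y$ be a uniform homeomorphism, with metrics $\rho$ on $X$ and $\sg$ on $Y$. The transport lemma I would isolate first is: if $\mc U$ is a uniform covering of $Y$, then $h^{-1}(\mc U)=\{h^{-1}(U)\setsep U\in\mc U\}$ is a uniform covering of $X$. Indeed, if $\mc U$ is $r$-uniform, then uniform continuity of $h$ yields $\de>0$ with $h\bigl(U_X(x,\de)\bigr)\subset U_Y\bigl(h(x),r\bigr)$ for every $x\in X$; as some member of $\mc U$ contains $U_Y(h(x),r)$, the ball $U_X(x,\de)$ lies in the corresponding member of $h^{-1}(\mc U)$, so $\mc U_X(\de)$ refines $h^{-1}(\mc U)$. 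Taking preimages also sends refinements of $\mc U$ to refinements of $h^{-1}(\mc U)$, and since $h$ is a bijection the assignment $U\mapsto h^{-1}(U)$ is injective and commutes with intersections while preserving their non-emptiness, whence $\ord\bigl(h^{-1}(\mc V)\bigr)=\ord\mc V$ for every family $\mc V$. Now, given a uniform covering $\mc U$ of $X$, applying the lemma to $h^{-1}$ shows $h(\mc U)$ is a uniform covering of $Y$; a uniform refinement $\mc W$ of $h(\mc U)$ with $\ord\mc W<\pc Y$ pulls back, by the lemma applied to $h$, to a uniform refinement $h^{-1}(\mc W)$ of $\mc U$ of the same order. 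Hence $\pc X\le\pc Y$, and likewise $\rpc X\le\rpc Y$; the reverse inequalities follow by running the same argument with $h^{-1}$ in place of $h$.

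For (b) I would use the reformulation via bounded coverings. Fix $r>0$. By that reformulation applied to $X$ there is an $r$-bounded uniform covering $\mc W$ of $X$ with $\ord\mc W<\pc X$; I then restrict it, putting $\mc W\restr{Y}=\{W\cap Y\setsep W\in\mc W\}$. This covers $Y$; each $W\cap Y$ has diameter at most $\diam W\le r$, so $\mc W\restr{Y}$ is again $r$-bounded; and if $\mc U_X(t)$ refines $\mc W$ then intersecting everything with $Y$ shows $\mc U_Y(t)$ refines $\mc W\restr{Y}$, so it is uniform. Finally, any subfamily of $\mc W\restr{Y}$ with non-empty intersection lifts, upon choosing for each set $W\cap Y$ a representative $W\in\mc W$, to a subfamily of $\mc W$ of the same cardinality with non-empty intersection, so $\ord\bigl(\mc W\restr{Y}\bigr)\le\ord\mc W<\pc X$. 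Thus for every $r>0$ the space $Y$ carries an $r$-bounded uniform covering of order $<\pc X$, which by the reformulation gives $\pc Y\le\pc X$; the identical computation with $\le$ replacing $<$ gives $\rpc Y\le\rpc X$.

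The content here is essentially bookkeeping, and I expect no deep obstacle; the only points demanding care are choosing the correct direction in which to apply uniform continuity in the transport lemma of (a), and confirming that $\ord$ is genuinely monotone under restriction in (b). The latter reduces to the lifting of witnessing subfamilies noted above, from which monotonicity of the defining supremum of successor cardinals (and of the finite case) is immediate.
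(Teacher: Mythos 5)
Your proposal is correct and takes essentially the same route as the paper: in (a) transport coverings through the uniform homeomorphism, using uniform continuity in both directions and bijectivity to preserve $\ord$, and in (b) restrict an $r$-bounded uniform covering of $X$ to $Y$, where your lifting of witnessing subfamilies just makes explicit the monotonicity of $\ord$ that the paper states without comment. The only cosmetic difference is in (a): the paper works with the bounded-covering reformulation, choosing a bound $\de$ via uniform continuity of $\Phi^{-1}$ so that preimages of a $\de$-bounded covering become $r$-bounded, whereas you argue straight from the refinement definition by pushing a covering forward and pulling a small-order refinement back; both versions rest on the same transport observations.
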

\begin{proof}
(a)
Let $\Phi\colon (Y,\sg)\to(X,\rho)$ be a uniform homeomorphism.
Let $r>0$.
There is $\de>0$ such that $\sg\bigl(\Phi^{-1}(x),\Phi^{-1}(y)\bigr)\le r$ whenever $x,y\in X$, $\rho(x,y)\le\de$.
Let $\mc V$ be a $\de$-bounded uniform covering of $X$ with $\ord\mc V<\pc X$ (resp. $\ord\mc V\le\rpc X$).
Let $s>0$ be such that $\mc V$ is $s$-uniform.
There is $\ve>0$ such that $\rho\bigl(\Phi(x),\Phi(y)\bigr)<s$ whenever $x,y\in Y$, $\sg(x,y)<\ve$.
Then $U_Y(x,\ve)\subset\Phi^{-1}\bigl(U_X(\Phi(x),s)\bigr)$ for every $x\in Y$ and so $\mc U=\{\Phi^{-1}(V)\setsep V\in\mc V\}$ is an $r$-bounded $\ve$-uniform covering of~$Y$.
Also, $\ord\mc U=\ord\mc V$, since $\Phi$ is a bijection.
Therefore $\pc Y\le\pc X$, resp. $\rpc Y\le\rpc X$.
The reverse inequalities follow by symmetry.

(b)
Let $r>0$.
Let $\mc V$ be an $r$-bounded uniform covering of $X$ with $\ord\mc V<\pc X$, resp. $\ord\mc V\le\rpc X$.
Let $s>0$ be such that $\mc V$ is $s$-uniform.
Put $\mc U=\{V\cap Y\setsep V\in\mc V\}$.
For every $x\in Y$ there is $V\in\mc V$ such that $U_X(x,s)\subset V$ and so $U_Y(x,s)\subset V\cap Y$.
Thus $\mc U$ is an $r$-bounded $s$-uniform covering of $Y$.
Finally, $\ord\mc W\le\ord\mc V$.
Hence $\pc Y\le\pc X$, resp. $\rpc Y\le\rpc X$.
\end{proof}

Next, let us mention an easy (but loose) estimate of the cardinal $\rpc$, see e.g. \cite[Lemma~1.1]{P94}:
\begin{fact}\label{f:rpc<=dens}
Let $X$ be a metric space.
Then $\rpc X\le\dens X$.
\end{fact}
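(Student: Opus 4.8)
The plan is to produce, for each fixed $r>0$, a single uniform refinement of the full uniform covering $\mc U(r)$ whose order does not exceed $\dens X$. By the reformulation of $\rpc$ recorded in the Fact above (to bound $\rpc X$ it suffices to refine each of the coverings $\mc U(r)$) together with the transitivity of refinement, this immediately yields $\rpc X\le\dens X$.

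Fix $r>0$ and choose a dense set $S\subset X$ with $\card S=\dens X$. I would take the refinement to be the family of balls centred at the points of $S$,
\[
\mc V=\{U(s,r/2)\setsep s\in S\}.
\]
Three things have to be checked, all routine. First, $\mc V$ covers $X$: given $x\in X$, density provides $s\in S$ with $\rho(x,s)<r/2$, so $x\in U(s,r/2)$. Second, $\mc V$ refines $\mc U(r)$, and hence the original covering: each $U(s,r/2)$ is contained in $U(s,r)\in\mc U(r)$. Third, $\mc V$ is uniform: for $x\in X$ pick $s\in S$ with $\rho(x,s)<r/4$; the triangle inequality then gives $U(x,r/4)\subset U(s,r/2)$, so $\mc U(r/4)$ refines $\mc V$ and $\mc V$ is $(r/4)$-uniform. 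Only the factor-of-two bookkeeping in the radii needs a little care here.

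It remains to estimate the order, and this is where the looseness enters. Since $\mc V$ is indexed by $S$, we have $\card\mc V\le\card S=\dens X$; consequently every subfamily $\mc D\subset\mc V$ (in particular every one with $\bigcap\mc D\neq\emptyset$) satisfies $\card\mc D\le\dens X$, whence $\ord\mc V\le\dens X$. Thus $\mc V$ is a uniform refinement of $\mc U(r)$ of order at most $\dens X$, as required. I do not expect any genuine obstacle in this argument: the only mildly delicate point is keeping the uniformity and refinement constants consistent, and the order bound is crude by design, reflecting merely the total size of the refining family and exploiting none of the geometric thinness of the covering. This is precisely why, for concrete spaces such as $L_p(\mu)$, the true value of $\rpc$ turns out to be far smaller than $\dens X$.
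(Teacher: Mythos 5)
Your construction of $\mc V$ and the verifications that it covers $X$, refines $\mc U(r)$, and is $(r/4)$-uniform are all correct. The gap is in the final order estimate, and it is a genuine one, not bookkeeping. By the paper's definition, for an infinite cardinal $\al$ the bound $\ord\mc V\le\al$ means $\sup\bigl\{(\card\mc D)^+\setsep\mc D\subset\mc V,\bigcap\mc D\neq\emptyset\bigr\}\le\al$, i.e. every subfamily with a common point must have cardinality \emph{strictly less} than $\al$. From $\card\mc V\le\dens X$ you only get $\card\mc D\le\dens X$, hence $(\card\mc D)^+\le(\dens X)^+$ and $\ord\mc V\le(\dens X)^+$; this proves only $\rpc X\le(\dens X)^+$, which is weaker than the Fact. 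Moreover the loss actually occurs for your covering: take $X=\ell_2(\kappa)$ with $\kappa=\dens X$. Any dense set $S$ meets $U(0,r/2)$ in a set of cardinality $\kappa$, and in a normed space distinct centres yield distinct balls of the same radius, so $\{U(s,r/2)\setsep s\in S,\ \norm s<r/2\}$ is a subfamily of $\mc V$ of cardinality $\kappa$ all of whose members contain $0$. Hence $\ord\mc V=\kappa^+>\dens X$, and no choice of dense set avoids this: a point simply lies in as many balls as there are nearby centres.

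The missing idea is precisely the disjointification trick of M.~E.~Rudin that the paper isolates in Lemma~\ref{l:ord-partition} (the paper itself gives no proof of Fact~\ref{f:rpc<=dens}, citing \cite[Lemma~1.1]{P94}, whose proof runs along these lines). Concretely: well-order $S=\{s_\ga\setsep\ga<\kappa\}$, put $U_\ga=U(s_\ga,r/2)$, $\wtilde U_\ga=\{x\in X\setsep U(x,r/4)\subset U_\ga\}$, and $\what U_\ga=U_\ga\setminus\bigcup_{\beta<\ga}\wtilde U_\beta$. As in the proof of Lemma~\ref{l:ord-partition}, $\{\what U_\ga\setsep\ga<\kappa\}$ is still a uniform ($(r/8)$-uniform) refinement of $\mc U(r)$, and if $\ga(x)$ is the least index with $U(x,r/4)\subset U_{\ga(x)}$, then $x\in\wtilde U_{\ga(x)}$, so $x\notin\what U_\beta$ for every $\beta>\ga(x)$; thus each point lies in at most $\card(\ga(x)+1)<\kappa$ members, which gives $(\card\mc D)^+\le\kappa$ for every subfamily with a common point, i.e. $\ord\le\dens X$. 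Alternatively, you can invoke Lemma~\ref{l:ord-partition} as a black box after splitting your $\mc V$ into $\cf(\dens X)$ subfamilies each of cardinality less than $\dens X$ (each such subfamily has order at most $\dens X$ by your counting argument, applied now legitimately); note that the naive partition into singletons does not satisfy the hypothesis $\card\Ga\le\cf\al$ of that lemma when $\dens X$ is singular, e.g. $\om_\om$.
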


%If $X$ is a compact metric space, then it is not difficult to see that every open covering of $X$ is uniform.
%Denoting by $\cd X$ the Čech-Lebesgue covering dimension of~$X$ it follows that if $\cd X$ is finite, then $\rpc X=\cd X+1$.
%Consequently, $\rpc B_{\R^n}=n+1$ (see \cite[Corollary~7.3.20]{Eng}).
The topological theory of dimension yields the following result: $\rpc B_{\R^n}=\rpc\R^n=n+1$ (see e.g. \cite[Theorem~11]{Sm} with \cite[Theorem~V.5]{I64}).
Combining this with Fact~\ref{f:rpc_embedded} it follows that if $X$ is an infinite-dimensional normed linear space, then $\rpc X\ge\rpc B_X\ge\om$.

We will make use of the following lemma, which is based on a trick from Mary Ellen Rudin's proof of the Stone paracompactness theorem (cf. also \cite{AKR13}).
\begin{lemma}\label{l:ord-partition}
Let $\mc U$ be an $r$-uniform covering of a metric space $X$.
Assume that $\al$ is an infinite cardinal such that $\mc U=\bigcup_{\ga\in\Ga}\mc U_\ga$ with $\ord\mc U_\ga\le\al$ for each $\ga\in\Ga$ and $\card\Ga\le\cf\al$.
Then $\mc U$ has an $\frac r2$-uniform refinement $\mc V$ with $\ord\mc V\le\al$.
\end{lemma}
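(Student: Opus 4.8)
The plan is to adapt the priority argument from Mary Ellen Rudin's proof of Stone's theorem to the present ``single scale, few pieces'' situation. First I would fix a well-ordering of the index set~$\Ga$. Since $\mc U$ is $r$-uniform, for every $x\in X$ there is at least one member of~$\mc U$ containing $U(x,r)$; let $\ga(x)\in\Ga$ be the \emph{least} index for which some $U\in\mc U_{\ga(x)}$ satisfies $U(x,r)\subseteq U$, and fix one such witness $U(x)\in\mc U_{\ga(x)}$. For each $U\in\mc U$ I would then set $V_U=\bigcup\bigl\{U(x,\tfrac r2)\setsep U(x)=U\bigr\}$ and let $\mc V=\{V_U\setsep U\in\mc U,\ V_U\neq\emptyset\}$.

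The two easy properties come out immediately from the triangle inequality, and this is where the halving is spent. Since each defining ball satisfies $U(x,\tfrac r2)\subseteq U(x,r)\subseteq U(x)=U$, we get $V_U\subseteq U$, so $\mc V$ refines~$\mc U$. For $\tfrac r2$-uniformity, given $y\in X$ the center $y$ itself contributes $U(y,\tfrac r2)$ to $V_{U(y)}$, whence $U(y,\tfrac r2)\subseteq V_{U(y)}\in\mc V$. Finally, if $z\in V_U$ then $z\in U(x,\tfrac r2)$ for some center $x$ with $U(x)=U$, so $\rho(x,z)<\tfrac r2$ and hence $U(z,\tfrac r2)\subseteq U(x,r)\subseteq U$; thus \emph{every} member $U$ with $z\in V_U$ contains the single ball $U(z,\tfrac r2)$, and in particular contains the point~$z$.

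The hard part will be the order estimate $\ord\mc V\le\al$, that is, showing that each point $z$ lies in strictly fewer than $\al$ of the sets~$V_U$. The common-ball observation just made is the key lever: for a fixed index $\ga$, any $U\in\mc U_\ga$ with $z\in V_U$ contains~$z$, so the number of such $U$ is at most the number of members of $\mc U_\ga$ passing through~$z$, which is $<\al$ because $\ord\mc U_\ga\le\al$. Summing over the at most $\cf\al$ indices in~$\Ga$ then bounds the total multiplicity at~$z$ by a sum of $\le\cf\al$ cardinals, each $<\al$, and the hypothesis $\card\Ga\le\cf\al$ is exactly what is meant to keep this sum below~$\al$. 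The genuinely delicate point, which I expect to require the full strength of the least-index priority, is to certify that a single point does not collect members from \emph{cofinally many} subfamilies at once: a naive bound only yields order $\le\al$, and one must rule out that the per-subfamily contributions accumulate cofinally up to~$\al$. The least-index choice forces every contributing ball around $z$ to be witnessed by a center whose own first-capturing index is the ambient~$\ga$, and it is this rigidity, together with the common-ball property, that one must exploit to show the set of active indices at~$z$ is small enough for the cardinal arithmetic to close and deliver $\ord\mc V\le\al$.
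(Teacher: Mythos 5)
Your refinement and $\tfrac r2$-uniformity checks are fine, but the order estimate --- which you yourself flag as the open point --- is not just unfinished: the construction as defined cannot deliver it. The least-index selection attaches each \emph{center} $x$ to its first capturing family, but it does nothing to stop a fixed point $z$ from collecting contributions from \emph{all} indices, because centers within $\tfrac r2$ of $z$ can have arbitrarily large first-capturing indices $\ga(x)$ (the ball $U(x,r)$ may meet obstructions invisible to $U(z,r)$). Concretely, let $r=1$, $\al=\om$, $\Ga=\om$, and let $X=\{z\}\cup\{x_\ga\setsep\ga<\om\}\cup\{p_\ga\setsep\ga<\om\}$ with $\rho(z,x_\ga)=0.4$, $\rho(x_\ga,x_\beta)=0.8$, $\rho(x_\ga,p_\ga)=0.9$, $\rho(z,p_\ga)=1.3$, $\rho(x_\ga,p_\beta)=1.5$ for $\beta\neq\ga$, and $\rho(p_\ga,p_\beta)=2$ (one checks all triangle inequalities hold). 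Put $\mc U_\ga=\{S_\ga\}$ with $S_\ga=\{z\}\cup\{x_\beta\setsep\beta<\om\}\cup\{p_\ga\}$. Then $\mc U=\{S_\ga\setsep\ga<\om\}$ is a $1$-uniform covering, $\ord\mc U_\ga=1$, and $\card\Ga=\cf\al$. Since $U(x_\ga,1)=S_\ga$ and $p_\ga\in S_\ga\setminus S_\beta$ for $\beta\neq\ga$, the \emph{only} member of $\mc U$ containing $U(x_\ga,1)$ is $S_\ga$ itself, so $\ga(x_\ga)=\ga$ and $U(x_\ga)=S_\ga$ regardless of how you well-order $\Ga$. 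As $z\in U(x_\ga,\tfrac12)$, your set $V_{S_\ga}$ contains $z$ for every $\ga<\om$, and the $V_{S_\ga}$ are pairwise distinct (each contains its own $p_\ga$), so $\ord\mc V\ge\om_1>\al$. There is also a cardinal-arithmetic slip in your sketch: even if each index contributed $<\al$ sets at $z$, a union over exactly $\cf\al$ many indices can have cardinality exactly $\al$ --- that is the definition of cofinality --- so ``$\card\Ga\le\cf\al$ keeps the sum below $\al$'' is false as stated; one must show that \emph{strictly fewer than} $\cf\al$ indices are active at each point.

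The missing idea is Rudin's \emph{deletion} step, which the paper uses and which your scheme omits: membership in an early set must actively remove a point from all later sets. Well-order each $\mc U_\ga$, let $\preceq$ be the induced lexicographic order on $\mc U$, set $\wtilde V=\{x\in X\setsep U(x,r)\subset V\}$, and replace each $U$ by $\what U=U\setminus\bigcup_{V\prec U}\wtilde V$. Uniformity with radius $\tfrac r2$ survives: for the $\preceq$-first $U$ with $U(x,\tfrac r2)\subset U$, any $z'\in U(x,\tfrac r2)\cap\wtilde V$ would give $U(x,\tfrac r2)\subset U(z',r)\subset V$ with $V\prec U$, contradicting minimality. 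For the order bound, if $U$ is the $\preceq$-first member with $U(z,r)\subset U$ and $U\in\mc U_\beta$, then $z\in\wtilde U$, hence $z\notin\what V$ for all $V\succ U$; so only indices $\ga\le\beta$ contribute at $z$, and since $\beta<\card\Ga\le\cf\al$ with $\cf\al$ regular, these are strictly fewer than $\cf\al$ indices, each contributing $<\al$ sets (because $\what V\subset V$ and $\ord\mc U_\ga\le\al$), whence $\card\mc D<\al$ and $\ord\mc V\le\al$. In the counterexample above this deletion does exactly the right thing: $z\in\wtilde S_0$, so $z$ survives only in $\what S_0$, while in your construction nothing ever deletes $z$ from the later sets.
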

\begin{proof}
We may assume without loss of generality that $\Ga$ is the ordinal interval $[1,\card\Ga)$ and that $\mc U_\ga$, $\ga\in\Ga$, are pairwise disjoint
in the sense that each $U\in \mc U$ belongs to precisely one $\mc U_\ga$.
On each $\mc U_\ga$, $\ga\in\Ga$, choose some well-ordering $\le$.
Let $\preceq$ be a lexicographic ordering on $\mc U$ induced by $(\Ga,\le)$ and $(\mc U_\ga,\le)$.
Given any $A\subset X$ let us denote $\wtilde A=\{x\in X\setsep U(x,r)\subset A\}$.
For $U\in\mc U$ set $\what U=U\setminus\bigcup_{V\in\mc U, V\prec U}\wtilde V$.
Then $\mc V=\{\what U\setsep U\in \mc U\}$ is the desired refinement.
Indeed, let $x\in X$.
Let $U\subset\mc U$ be the first one in the ordering $\preceq$ such that $U(x,\frac r2)\subset U$ (such $U$ exists, since $\mc U$ is $r$-uniform).
It follows that $U(x,\frac r2)\cap\wtilde V=\emptyset$ for every $V\in\mc U$, $V\prec U$, since $U(x,\frac r2)\subset U(z,r)$ for any $z\in U(x,\frac r2)$.
Consequently, $U(x,\frac r2)\subset\what U$.

To see that $\ord\mc V\le\al$ let $\mc D\subset\mc V$ be such that $\bigcap\mc D\neq\emptyset$.
For $\ga\in\Ga$ set $\mc D_\ga=\{\what V\in\mc D\setsep V\in\mc U_\ga\}$.
Then $\mc D=\bigcup_{\ga\in\Ga}\mc D_\ga$ and $(\card\mc D_\ga)^+\le\al$, since $\bigcap\mc D_\ga\neq\emptyset$ in case that $\mc D_\ga\neq\emptyset$.
Let $x\in\bigcap\mc D$ and let $U\in\mc U$ be the first one in the ordering $\preceq$ such that $U(x,r)\subset U$.
It follows that $x\in\wtilde U$ and consequently $x\notin\what V$ for any $V\in\mc U$, $V\succ U$.
Let $\beta\in\Ga$ be such that $U\in\mc U_\beta$.
Then $\mc D_\ga=\emptyset$ for $\ga>\beta$ and so $\mc D=\bigcup_{\ga\le\beta}\mc D_\ga$.
Since $\card\mc D_\ga<\al$ and $\beta<\cf\al$, it follows that $\card\mc D<\al$.
Therefore $(\card\mc D)^+\le\al$.
\end{proof}

One of our main tools is the following result:
\begin{theorem}\label{t:rpc-ball}
Let $X$ be a normed linear space.
If $\rpc B_X\le\al$, where $\al$ is an infinite regular cardinal, then $\rpc X\le\al$.
Consequently if $\rpc B_X$ is an infinite regular cardinal, then $\rpc X=\rpc B_X$.
\end{theorem}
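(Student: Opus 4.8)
The plan is to verify the first assertion through the normed-space reformulation of $\rpc$ recorded above: it suffices to produce, for some fixed $r>0$, a uniform refinement of $\mc U(r)$ of order $\le\al$ (equivalently, a single \emph{bounded} uniform covering of $X$ of order $\le\al$). Since $\al$ is infinite and regular we have $\cf\al=\al$, so Lemma~\ref{l:ord-partition} becomes available as soon as we can exhibit an $r$-uniform covering of $X$ written as a union of at most $\al$ subfamilies, each of order $\le\al$: the lemma then returns an $\frac r2$-uniform refinement of order $\le\al$, which is automatically bounded and hence witnesses $\rpc X\le\al$. Thus the whole task reduces to manufacturing such a decomposition.

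The building blocks come from the ball together with the dilation- and translation-homogeneity of $X$. Fix a bounded uniform covering $\mc W$ of $B_X$ of order $\le\al$ (available since $\rpc B_X\le\al$), say $s$-uniform with $s\le\frac12$, and observe that for every $z\in X$ the translate $z+\mc W$ is an $s$-uniform covering of $B(z,1)$ of the same order. Choosing a maximal $\frac12$-separated set $N\subset X$, the balls $U(z,1)$, $z\in N$, cover $X$ and every point is $\frac12$-interior to one of them, so $\mc U=\bigcup_{z\in N}(z+\mc W)$ is a bounded $s$-uniform covering of $X$. The remaining point is to reorganise the translates $\{z+\mc W\}_{z\in N}$ into at most $\cf\al=\al$ subfamilies, each of order $\le\al$, and then invoke Lemma~\ref{l:ord-partition}.

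When $\dens X\le\al$ this is immediate: a $\frac12$-separated set is discrete, so $\card N\le\dens X\le\al=\cf\al$, and the individual translates $z+\mc W$ already constitute $\le\al$ subfamilies, each of order $\le\al$; Lemma~\ref{l:ord-partition} finishes the argument. The main obstacle is the complementary range $\dens X>\al$ (as occurs for $X=c_0(\Ga)$ with $\al=\om$), where $N$ is too large to index the subfamilies directly. Here the number of net points meeting a fixed bounded ball—the local packing—may itself exceed $\al$, so the subfamilies \emph{cannot} be split off by geometric separation; the regrouping must instead exploit the internal shape of $\mc W$ together with the self-similarity of $X$ under dilations. A natural attempt organises the construction along the countably many dyadic shells $\{2^n\le\norm x\le2^{n+1}\}$, $n\in\N$ (only $\om\le\cf\al$ of them), transporting a ball covering to each shell by the appropriate dilation so as to inherit its low order. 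The difficulty is that dilation inflates the diameters of the transported pieces, so keeping the final covering simultaneously of order $<\al$ inside each subfamily, uniformly bounded, and of one fixed uniformity constant across all scales requires a carefully scale-balanced choice of the coverings used at each shell. This reconciliation is the crux of the proof; once it is in place, Lemma~\ref{l:ord-partition} converts the $\le\cf\al$ subfamilies into a single uniform refinement of order $\le\al$, yielding $\rpc X\le\al$.

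The stated consequence then follows formally. If $\rpc B_X$ is an infinite regular cardinal, applying the first part with $\al=\rpc B_X$ gives $\rpc X\le\rpc B_X$, while $B_X\subset X$ gives $\rpc B_X\le\rpc X$ by Fact~\ref{f:rpc_embedded}(b); hence $\rpc X=\rpc B_X$.
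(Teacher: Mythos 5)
There is a genuine gap, and it sits exactly where you placed it: the case $\dens X>\al$. Your reduction to Lemma~\ref{l:ord-partition} is the right frame, and your argument for $\dens X\le\al$ is correct --- but note that this case is vacuous as new content, since Fact~\ref{f:rpc<=dens} already gives $\rpc X\le\dens X\le\al$ there; the entire force of the theorem is the complementary range. For that range you only name the difficulty (``a carefully scale-balanced choice of the coverings used at each shell'') without resolving it, and the dyadic-shell sketch as described would in fact fail: to keep the pieces transported to the shell at radius $2^n$ uniformly bounded you must start from a covering of $B_X$ of mesh about $2^{-n}$, and while $\rpc B_X\le\al$ supplies such a covering with \emph{some} uniformity constant $r_n>0$, nothing ties $r_n$ to the mesh (the ball is not homogeneous), so after dilation by $2^n$ the uniformity constants $2^nr_n$ may tend to $0$ and the shells' coverings need not share a single uniformity constant. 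Dilation is the wrong transport for the ball covering; translation is the right one.

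The missing idea --- the crux of the paper's proof --- is a doubling step that keeps the scale fixed. Fix one $\frac12$-bounded $r$-uniform covering $\mc W$ of $B_X$ with $\ord\mc W\le\al$, and prove: if $A\subset X$ has a $\frac12$-bounded $r$-uniform covering of order $\le\al$, so does $2A$. Indeed, dilating the covering of $A$ by $2$ gives a $1$-bounded $2r$-uniform covering $\mc V$ of $2A$ of order $\le\al$; each $V\in\mc V$ lies in a ball of radius $1$, i.e.\ in a \emph{translate} of $B_X$, so it can be re-covered at the original fixed scale by (the trace of) a translate of $\mc W$. The union of these re-coverings is again $\frac12$-bounded and $r$-uniform, and its order stays $\le\al$ precisely by regularity: a point lies in fewer than $\al$ of the inflated pieces $V$, each contributing fewer than $\al$ sets, and a union of $<\al$ sets of size $<\al$ has size $<\al$ when $\al$ is regular. (This is where regularity enters substantively, not only through $\cf\al=\al$ in Lemma~\ref{l:ord-partition}.) Iterating from $A=B_X$ yields, for every $n$, a $\frac12$-bounded $r$-uniform covering $\mc U_n$ of $B_X(0,2^n)$ of order $\le\al$ --- the scale never degrades because each doubling is immediately repaired at scale $r$ before the next one. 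Then $\bigcup_{n\in\N}\mc U_n$ is a uniformly bounded $r$-uniform covering of $X$ decomposed into countably many ($\om\le\cf\al$) subfamilies of order $\le\al$, and your intended final appeal to Lemma~\ref{l:ord-partition} goes through verbatim. Your derivation of the second assertion from the first, via Fact~\ref{f:rpc_embedded}(b), is correct and matches the paper.
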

\begin{proof}
In this proof every covering of any $A\subset X$ is understood as a covering of the metric space $A$, i.e. a covering by subsets of $A$.
Let $r>0$ be such that $B_X$ has a $\frac12$-bounded $r$-uniform covering of order at most $\al$.
The crucial step of the proof is the following observation:
\begin{itemize}
\item[($*$)]
Suppose that $A\subset X$ has a $\frac12$-bounded $r$-uniform covering of order at most $\al$.
Then $B=2A$ also has a $\frac12$-bounded $r$-uniform covering of order at most $\al$.
\end{itemize}
To see this let $\mc U$ be a $\frac12$-bounded $r$-uniform covering of $A$ with $\ord\mc U\le\al$.
Then $\mc V=\{2U\setsep U\in\mc U\}$ is a $1$-bounded $2r$-uniform covering of $B$ with $\ord\mc V\le\al$.
Since each $V\in\mc V$ is a subset of some ball $B_X(w,1)$, it has a $\frac12$-bounded $r$-uniform covering $\mc W_V$ with $\ord\mc W_V\le\al$.
Set $\mc W=\bigcup_{V\in\mc V}\mc W_V$.
Then $\mc W$ is clearly $\frac12$-bounded.
Further, given any $x\in B$ there is $V\in\mc V$ such that $U_B(x,r)=U_X(x,r)\cap B\subset V$ and there is $W\in\mc W_V$ such that $U_V(x,r)=U_X(x,r)\cap V\subset W$.
Hence $U_B(x,r)=U_V(x,r)\subset W$.
It follows that $\mc W$ is an $r$-uniform covering of $B$.

Finally, to see that $\ord\mc W\le\al$ let $\mc D\subset\mc W$ be such that $\bigcap\mc D\neq\emptyset$.
For $V\in\mc V$ set $\mc D_V=\{W\in\mc D\setsep W\in\mc W_V\}$.
Let $x\in\bigcap\mc D$ and let $\mc A=\{V\in\mc V\setsep x\in V\}$.
Then $\bigcap\mc A\neq\emptyset$ and hence $\card\mc A<\al=\cf\al$.
Note that $W\subset V$ whenever $W\in\mc D_V$ and $V\in\mc V$.
Thus $\mc D_V=\emptyset$ for $V\in\mc V\setminus\mc A$.
Hence $\mc D=\bigcup_{V\in\mc A}\mc D_V$.
Since $\card\mc D_V<\al$ for $V\in\mc A$ as $\bigcap\mc D_V\neq\emptyset$,
it follows that $\card\mc D<\al$ and so $(\card\mc D)^+\le\al$

To finish, using observation ($*$) inductively starting with $A=B_X$ we conclude that for each $n\in\N$ the ball $B_X(0,2^n)$ has a $\frac12$-bounded $r$-uniform covering $\mc U_n$ with $\ord\mc U_n\le\al$.
It follows that $\mc U=\bigcup_{n=1}^\infty\mc U_n$ is a uniformly bounded uniform covering of~$X$:
for each $x\in X$ consider $n\in\N$ satisfying $2^n\ge\norm x+r$, then $U_X(x,r)\subset B_X(0,2^n)$ and there is $U\in\mc U_n$ such that $U_X(x,r)\subset U\subset B_X(0,2^n)$.
Finally, $\mc U$ has a uniform refinement of order at most $\al$ by Lemma~\ref{l:ord-partition}.

The last statement of the theorem follows from Fact~\ref{f:rpc_embedded}(b).
\end{proof}

The following theorem, which summarises properties equivalent to uniform embedding into $c_0(\Ga)$, is a combination of results of J.~Pelant \cite{P94}, A.~Swift \cite{S18}, as well as some new ones:
\begin{theorem}\label{t:embedc0-char}
Let $X$ be a normed linear space.
The following statements are equivalent:
\begin{enumerate}[(i)]
\item $\rpc X\le\om$.
\item $X$ has the uniform Stone property.
\item $X$ has the coarse Stone property.
\item $X$ admits a coarse embedding into $c_0(\Ga)$ for some set $\Ga$.
\item $X$ admits a uniform embedding into $c_0(\Ga)$ for some set $\Ga$.
\item $X$ admits a bi-Lipschitz embedding into $c_0(\Ga)$ for some set $\Ga$.
\item $\rpc B_X\le\om$.
\item $B_X$ admits a uniform embedding into $c_0(\Ga)$ for some set $\Ga$.
\item $B_X$ admits a bi-Lipschitz embedding into $c_0(\Ga)$ for some set $\Ga$.
\end{enumerate}
\end{theorem}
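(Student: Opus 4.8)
The plan is to prove the nine conditions equivalent by assembling a web of implications, separating the statements about $X$ itself (conditions (i)--(vi)) from those about its unit ball (conditions (vii)--(ix)) and attaching the latter to the former through Theorem~\ref{t:rpc-ball} and a restriction argument. Among the ambient conditions several equivalences are already available: (i)$\iff$(ii) is exactly the remark that a metric space has the uniform Stone property precisely when $\rpc X\le\om$; (i)$\iff$(v) is Pelant's Theorem~\ref{t:rpc-embed}; and the two coarse conditions are handled by Swift's work \cite{S18}, which yields (iii)$\iff$(iv) (coarse Stone property versus coarse embeddability) and, for normed spaces, (iv)$\iff$(v) (coarse embeddability into $c_0(\Ga)$ is equivalent to uniform embeddability). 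Since a bi-Lipschitz embedding is in particular both uniform and coarse, (vi)$\Rightarrow$(v) and (vi)$\Rightarrow$(iv) are immediate. This isolates a single genuinely substantial implication on the ambient side, namely (v)$\Rightarrow$(vi).

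For the unit-ball conditions I would argue as follows. Since $B_X\subset X$, Fact~\ref{f:rpc_embedded}(b) gives $\rpc B_X\le\rpc X$, so (i)$\Rightarrow$(vii) is trivial; conversely (vii)$\Rightarrow$(i) is precisely Theorem~\ref{t:rpc-ball} applied with the regular cardinal $\al=\om$. Applying Pelant's Theorem~\ref{t:rpc-embed} to the metric space $B_X$ gives (vii)$\iff$(viii) (with $\card\Ga\le\dens X$). For the bi-Lipschitz ball condition, (ix)$\Rightarrow$(viii) is again trivial, while (ix) is reached from the ambient cycle by restriction, since the restriction of a bi-Lipschitz embedding of $X$ to $B_X$ is a bi-Lipschitz embedding of $B_X$, i.e.\ (vi)$\Rightarrow$(ix). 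Together these tie (vii), (viii), (ix) into the main equivalence, \emph{provided} (v)$\Rightarrow$(vi) is in hand.

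Thus the whole theorem rests on the implication (v)$\Rightarrow$(vi): for a normed space, a uniform embedding into $c_0(\Ga)$ (equivalently $\rpc X\le\om$) can be upgraded to a bi-Lipschitz one. This is the step I expect to be the main obstacle, and it is exactly where the linear (homogeneous) structure is indispensable, since for general metric spaces the upgrade fails by \cite[Corollary~4.4]{P94}. The approach I would take is an Aharoni-type construction made scale-coherent by homogeneity: from $\rpc X\le\om$ produce, for every dyadic scale, a point-finite covering $\mc U_n$ that is $2^n$-uniform and $O(2^n)$-bounded with $\ord\mc U_n\le\om$ (obtained from a single such covering of $B_X$ by rescaling), attach to each $U\in\mc U_n$ a truncated distance-to-complement function that is Lipschitz with a controlled constant and supported in $U$, and take these functions, indexed by $\Ga=\bigsqcup_n\mc U_n$, as the coordinates of the embedding. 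Point-finiteness forces finitely many nonzero coordinates per scale, the uniformity of $\mc U_n$ produces at the scale matching $\norm{x-y}$ a coordinate separating $x$ and $y$ by a definite fraction of $\norm{x-y}$ (the lower bound), and the truncation bounds the Lipschitz constant uniformly (the upper bound).

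The technical heart — and the point I would be most careful about — is ensuring that the resulting map actually takes values in $c_0(\Ga)$: at a fixed point the coordinates coming from fine scales are automatically small, but those from the infinitely many coarse scales must also decay, which forces the scale-$n$ coordinate to record only the local, scale-$n$ geometry rather than the global depth of the point inside a large covering member. I would control this by first carrying out the construction on the bounded set $B_X$, where only finitely many coarse scales are relevant so that $c_0$-membership is automatic, thereby obtaining (ix) directly (hence (viii), and via Theorem~\ref{t:rpc-ball} also (vii) and (i)); and then extending to all of $X$ by gluing rescaled copies of the ball embedding over the dyadic annuli $B_X(0,2^{n+1})\setminus B_X(0,2^n)$, arranging the coordinate blocks so that the coarse tail still vanishes. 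If one prefers, the equivalence (v)$\iff$(vi) may simply be cited, as it was already known to J.~Pelant.
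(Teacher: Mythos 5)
Your proposal is correct and takes essentially the same route as the paper: the same web of citations (Theorem~\ref{t:rpc-embed} for (i)$\Leftrightarrow$(v) and (vii)$\Leftrightarrow$(viii), Swift's results in \cite{S18} for the coarse conditions, and Theorem~\ref{t:rpc-ball} for the one genuinely new implication (vii)$\Rightarrow$(i)), together with the trivial restriction implications (vi)$\Rightarrow$(ix)$\Rightarrow$(viii). For (v)$\Rightarrow$(vi) the paper likewise just cites \cite[Remark~4.6(3)]{P94} (cf. \cite[Theorem~3]{HJ08}), so your fallback to citing this known result of Pelant is exactly the paper's move, and your sketched Aharoni-type construction is not needed.
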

The equivalence with statements (vii)--(ix) is new.
\begin{proof}[Proof of Theorem~\ref{t:embedc0-char}]
(i)$\Leftrightarrow$(ii) is just the definition.
(i)$\Leftrightarrow$(v) and (vii)$\Leftrightarrow$(viii) follow from Theorem~\ref{t:rpc-embed}.
(vi)$\Rightarrow$(v) is trivial, (v)$\Rightarrow$(vi) is in \cite[Remark~4.6(3)]{P94} (cf. also \cite[Theorem~3]{HJ08}).
(ii)$\Leftrightarrow$(iii) is in \cite[Lemma~3.7]{S18}.
(iv)$\Leftrightarrow$(v) is in \cite[Corollary~3.14]{S18}.
(vi)$\Rightarrow$(ix)$\Rightarrow$(viii) is trivial.
(vii)$\Rightarrow$(i) follows from Theorem~\ref{t:rpc-ball}.
\end{proof}

In light of the above characterisations it is clear that the problem of uniform embedding into $c_0(\Ga)$ is not just a special result and moreover it has a general answer.
However, deciding the embeddability for concrete spaces is not so easy.
Let us describe some crucial results in this direction, which were in fact obtained before Theorem~\ref{t:rpc-embed} was discovered.

V.~Rödl \cite{R87} constructed a metric space $X$ with $\card X=\om_1$ and $\rpc X=\om_1$.

An interesting problem, posed in \cite{GLZ}, is whether $\ell_\infty$ embeds uniformly into some $c_0(\Ga)$.
In view of Theorem~\ref{t:embedc0-char} this is equivalent to asking whether $\rpc\ell_\infty\le\om$.
\cite[Theorem~5.1]{Ho06} would imply that the answer is negative.
Unfortunately the argument of A.~Hohti is not correct and the problem seems to be still open.
Similarly, the authors in \cite{PR92} mention an argument of J.~Pelant that $\rpc\ell_\infty>\om$, but the details have not been published.
The best result in this direction are due to J.~Pelant.
First in \cite{P75}, using the Baumgartner model of ZFC, it is shown that $\rpc\ell_\infty(2^{\om_1})>\om$.
In \cite[Remark on p.~160]{P77}, J.~Pelant proves the next theorem.
\begin{theorem}[\cite{P77}]
If $\beta<\al$ are cardinals, where $\beta$ is regular, then $\rpc\ell_\infty(\al)>\beta$.
\end{theorem}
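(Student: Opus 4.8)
First I would unwind what $\rpc\ell_\infty(\al)>\beta$ asks for. Using the homogeneity reformulation of the Fact, it suffices to fix one full uniform cover, say $\mc U(1)=\{U(x,1)\setsep x\in\ell_\infty(\al)\}$, and show that it has \emph{no} uniform refinement of order $\le\beta$. Recall that, by the definition of $\ord$, a refinement $\mc V$ satisfies $\ord\mc V>\beta$ precisely when some intersecting subfamily $\mc D\subset\mc V$ has $\card\mc D\ge\beta$; equivalently, some point of $\ell_\infty(\al)$ lies in at least $\beta$ members of $\mc V$. So the whole task reduces to the following: given an arbitrary $s$-uniform refinement $\mc V$ of $\mc U(1)$ (so every member has $\diam\le2$), produce a single point lying in $\ge\beta$ distinct members of $\mc V$, under the standing assumption (for contradiction) that every point lies in fewer than $\beta$ members.

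\textbf{Where the difficulty lies, and why geometry alone is not enough.} The naive idea is to cluster $\beta$ many centres $x_\xi$ with $\|x_\xi-z\|<s$ around a common point $z$ (so that $z\in U(x_\xi,s)\subset V_\xi$ for each chosen member $V_\xi$), and to argue the $V_\xi$ are pairwise distinct. This cannot be forced metrically: since $z$ lies within $s$ of every $x_\xi$, the balls $U(x_\xi,s)$ have union of diameter $<4s$, and the adversary is free to choose $s$ arbitrarily small relative to the diameter bound $2$, so all of them may legitimately sit inside one member. Hence distinctness of the $V_\xi$ must be manufactured \emph{combinatorially} from the order hypothesis, not geometrically. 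This is the main obstacle, and it is exactly where both the regularity of $\beta$ and the inequality $\al>\beta$ must be spent.

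\textbf{The construction I would attempt.} I would build, by transfinite recursion of length $\beta$, an increasing chain of ``test configurations'' living in the $\al$ independent coordinate directions, reserving at each stage a \emph{fresh} coordinate $\ga_\xi\in\al$ (possible because at stage $\xi$ only $\card\xi\le\beta<\al$ coordinates have been used, and $\ell_\infty$ lets each coordinate be manipulated independently). At a successor stage the order hypothesis is invoked as a pigeonhole: the $<\beta$ members already accumulated through the current candidate point cannot, because they have diameter $\le2$ and hence all lie in a bounded neighbourhood, account for the ball carried into the fresh direction $\ga_\xi$; one then selects a genuinely new member covering that fresh ball. The regularity $\cf\beta=\beta$ is used at limit stages to take a coherent limit point (a pressing-down/diagonal selection over the coordinates $\{\ga_\eta\setsep\eta<\xi\}$) that remains inside every member produced so far, so that after $\beta$ steps a single point sits in $\beta$ distinct members of $\mc V$, contradicting $\ord\mc V\le\beta$. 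A cleaner packaging, which I would check as an alternative, is to isolate the combinatorics into an abstract metric space: construct (in the spirit of the Rödl example cited in the excerpt, generalised from $\om_1$ to general regular $\beta<\al$) a metric space $M$ with $\dens M\le\al$ and $\rpc M>\beta$, embed $M$ isometrically into $\ell_\infty(\al)$ via the standard universality embedding, and conclude by Fact~\ref{f:rpc_embedded}(b) that $\rpc\ell_\infty(\al)\ge\rpc M>\beta$.

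\textbf{The hard step.} In either route the crux is the same: guaranteeing that $\beta$ genuinely distinct members pass through one point. The delicate bookkeeping is to keep the recursion alive through all $\beta$ stages — each successor step must both respect the fixed uniformity scale $s$ (which is adversarial and may be tiny) and exploit the order-$\le\beta$ assumption to extract a previously unseen member, while the limit steps must preserve a common point. Getting the cardinal arithmetic to close — using $\cf\beta=\beta$ so that fewer than $\beta$ members never exhaust the fresh directions, and $\al>\beta$ so that fresh coordinates are always available — is the step I expect to require the most care.
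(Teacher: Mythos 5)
First, a point of reference: the paper itself contains no proof of this theorem --- it is quoted from Pelant's paper \cite{P77} (the Remark on p.~160), so your argument has to stand entirely on its own. Your reduction is correct: by homogeneity it suffices to show that $\mc U(1)$ has no uniform refinement of order $\le\beta$, i.e.\ that in any $s$-uniform refinement by sets of diameter $\le2$ some point lies in at least $\beta$ distinct members. You also correctly diagnose the central obstruction, namely that distinctness of the members cannot be forced metrically. The gap is that your recursion does not overcome the very obstruction you name. The justification of the successor step --- that the fewer than $\beta$ members already accumulated ``have diameter $\le2$ and hence all lie in a bounded neighbourhood'' and so ``cannot account for the ball carried into the fresh direction'' --- is simply false in $\ell_\infty(\al)$: for any candidate point $z$, any scale $s\le\frac12$, and \emph{any} family of fresh coordinates $\{\ga_\xi\}_{\xi<\beta}$, the union $\bigcup_{\xi<\beta}U\bigl(z+\tfrac12e_{\ga_\xi},s\bigr)$ has diameter at most $1+2s\le2$ in the sup norm, so a single member of the refinement may legitimately contain every fresh-direction ball you will ever produce near $z$. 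Since your construction must keep all chosen members passing through one common point, all witness balls stay within $O(s)$ of $z$; the argument is thus local, and locally the adversary can always answer with one and the same member, so no pigeonhole on the $<\beta$ accumulated members ever extracts a new one. The limit stages are also unsupported: $s$-uniformity guarantees that each ball $U(x,s)$ is contained in \emph{some} member, not that the member you happened to choose contains a ball around subsequent candidate points, so ``remaining inside every member produced so far'' while moving the point is not available, and regularity of $\beta$ does not repair this.

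The fallback route defers the entire content of the theorem rather than proving it: Rödl's example \cite{R87} (a metric space of cardinality $\om_1$ with $\rpc=\om_1$), combined with the Kuratowski-type isometric embedding into $\ell_\infty(\al)$ and Fact~\ref{f:rpc_embedded}(b), handles only the case $\beta=\om$; for an arbitrary regular $\beta$ the existence of a metric space of density at most $\al$ (or $\beta^+$) with reduced point character exceeding $\beta$ is essentially equivalent to what is to be shown, and no construction is offered (note also that Rödl's example postdates Pelant's theorem by a decade, so it cannot be the hidden engine). What is missing from both routes is a genuinely global combinatorial input applied to the whole cover at once --- of the kind exemplified in this circle of results by the colouring lemma of Pelant and Rödl (Lemma~\ref{l:root}) used in the proof of Theorem~\ref{t:cotype-rpc-lim} --- as opposed to a transfinite recursion around a single point, which, as shown above, the geometry of $\ell_\infty(\al)$ defeats.
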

In particular, $\rpc\ell_\infty(\om_1)>\om$.
This seems to be ZFC only.
Considering the model of set theory in which $\cntum>\om_\om$ and using the well-known structural result that $\ell_1(\cntum)$ is isometric to a subspace of $\ell_\infty$ together with Theorem~\ref{t:rpc-lp-lim} below,
we obtain the following observation:
\begin{theorem}
$\rpc\ell_\infty\ge\om_\om$ if and only if $\cntum>\om_\om$.
\end{theorem}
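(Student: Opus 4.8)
The plan is to trap $\rpc\ell_\infty$ between the reduced point character of a large $\ell_1$-subspace and $\dens\ell_\infty$, and then to repair the non-strict inequality coming from the density bound by a cofinality argument. Throughout I use that $\ell_\infty=\ell_\infty(\om)$ satisfies $\dens\ell_\infty=\cntum$ and contains an isometric copy of $\ell_1(\cntum)$.

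Before the two implications I would dispatch these two structural facts. The equality $\dens\ell_\infty=\cntum$ follows because $\{0,1\}^\om\subset\ell_\infty$ is a $1$-separated set of cardinality $\cntum$, while $\card\ell_\infty=\cntum$ bounds the density from above. For the isometric copy of $\ell_1(\cntum)$ I would fix an independent family $\{A_\al\}_{\al<\cntum}$ of subsets of $\om$ and map the canonical vector $e_\al$ to $x_\al=\chi_{A_\al}-\chi_{\om\setminus A_\al}\in\ell_\infty$; independence of the family gives $\norm{\sum_\al c_\al x_\al}_\infty=\sum_\al\abs{c_\al}$ for finitely supported scalars, so $\cspan\{x_\al\}$ is an isometric, hence metric, copy of $\ell_1(\cntum)$ inside $\ell_\infty$.

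For the implication $\cntum>\om_\om\Rightarrow\rpc\ell_\infty\ge\om_\om$ I would argue as follows. Since $\ell_1(\cntum)$ is a metric subspace of $\ell_\infty$, Fact~\ref{f:rpc_embedded}(b) yields $\rpc\ell_\infty\ge\rpc\ell_1(\cntum)$. The index set has cardinality $\cntum>\om_\om$, so Theorem~\ref{t:rpc-lp-lim} applies and gives $\rpc\ell_1(\cntum)\ge\om_\om$; combining the two inequalities finishes this direction. Note that here I need the full quantitative conclusion of Theorem~\ref{t:rpc-lp-lim}, not merely the non-embeddability statement $\card\Lambda\ge\om_\om\Rightarrow\rpc\ell_1(\Lambda)>\om$, which by itself would only produce $\rpc\ell_\infty\ge\om_1$.

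For the converse $\rpc\ell_\infty\ge\om_\om\Rightarrow\cntum>\om_\om$ I would first invoke Fact~\ref{f:rpc<=dens} to get $\rpc\ell_\infty\le\dens\ell_\infty=\cntum$, so that $\cntum\ge\om_\om$. The only real subtlety is promoting this to a strict inequality: a priori $\cntum=\om_\om$ would force $\rpc\ell_\infty=\om_\om$ and destroy the equivalence. This case, however, is vacuous in ZFC. By König's theorem the continuum has uncountable cofinality, $\cf\cntum>\om$, whereas $\cf\om_\om=\om$; hence $\cntum\neq\om_\om$, and $\cntum\ge\om_\om$ improves to $\cntum>\om_\om$. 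Thus the crux of the proof is not an estimate but the observation that the singular value $\om_\om$ simply cannot be attained by the continuum, which makes the density bound of Fact~\ref{f:rpc<=dens} automatically strict at $\om_\om$.
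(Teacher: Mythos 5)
Your proof is correct and follows essentially the same route as the paper: the isometric copy of $\ell_1(\cntum)$ inside $\ell_\infty$ (for which the paper simply cites \cite[Exercise~5.34]{FHHMZ}, the independent-family construction you spell out) combined with Theorem~\ref{t:rpc-lp-lim} in one direction, and in the other the bound $\rpc\ell_\infty\le\dens\ell_\infty=\cntum$ from Fact~\ref{f:rpc<=dens} sharpened to a strict inequality by König's theorem $\cf\cntum>\om$, exactly as in the paper. One cosmetic point: Theorem~\ref{t:rpc-lp-lim} as stated applies only to $\ell_1(\om_\al)$ with $\al$ a limit ordinal, and $\cntum$ need not be of that form, so to conclude $\rpc\ell_1(\cntum)\ge\om_\om$ you should pass through the subspace $\ell_1(\om_\om)\subset\ell_1(\cntum)$ and apply Fact~\ref{f:rpc_embedded}(b) once more.
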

\begin{proof}
$\Rightarrow$ follows from the fact that $\dens\ell_\infty=\cntum$, Fact~\ref{f:rpc<=dens}, and the fact that $\cf\cntum>\om$ (\cite[Theorem~5.16]{Jech}).
$\Leftarrow$ follows from the fact that $\ell_1(\cntum)$ is isometric to a subspace of $\ell_\infty$ (\cite[Exercise~5.34]{FHHMZ}) and Theorem~\ref{t:rpc-lp-lim}.
\end{proof}
So the answer to the question is consistently negative, but a ZFC proof is yet to be found.

Another class of Banach spaces for which $\rpc$ has been studied are $\ell_p(\Ga)$, $1\le p<\infty$.
An early result of V.~Rödl \cite{R78} is that $\rpc\ell_1(\Ga)$ can be arbitrarily large.
In \cite{PR92} (cf. also \cite{HS18}) the main result is the following:
\begin{theorem}[\cite{PR92}]\label{t:rpc-lp-lim}
If $\al$ is a limit ordinal and $1\le p<\infty$, then $\rpc\ell_p(\om_\al)=\om_\al$.
\end{theorem}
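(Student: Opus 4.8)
The plan is to prove the two inequalities $\rpc\ell_p(\om_\al)\le\om_\al$ and $\rpc\ell_p(\om_\al)\ge\om_\al$ separately, with essentially all the difficulty concentrated in the second. The upper bound is immediate: for $1\le p<\infty$ one has $\dens\ell_p(\om_\al)=\om_\al$, so Fact~\ref{f:rpc<=dens} gives $\rpc\ell_p(\om_\al)\le\om_\al$ with no further work.

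For the lower bound I would exploit that $\al$ is a limit ordinal, so that $\om_\al$ is a limit cardinal whose predecessors are exactly the $\om_\beta$ with $\beta<\al$. It therefore suffices to show $\rpc\ell_p(\om_\al)>\om_\beta$ for each $\beta<\al$ and then pass to the supremum. Fix $\beta<\al$; since $\al$ is limit we have $\beta+1\le\al$, hence $\om_{\beta+1}\le\om_\al$, and the closed span of the first $\om_{\beta+1}$ coordinate vectors is an isometric copy of $\ell_p(\om_{\beta+1})$ inside $\ell_p(\om_\al)$. By Fact~\ref{f:rpc_embedded}(b) this yields $\rpc\ell_p(\om_\al)\ge\rpc\ell_p(\om_{\beta+1})$. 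Consequently the whole theorem reduces to the successor statement
\[
\rpc\ell_p(\kappa^+)>\kappa\quad\text{for every infinite cardinal }\kappa,
\]
applied with $\kappa=\om_\beta$ (so that $\kappa^+=\om_{\beta+1}$): this gives $\rpc\ell_p(\om_\al)\ge\om_{\beta+1}>\om_\beta$ for all $\beta<\al$, and hence $\rpc\ell_p(\om_\al)\ge\sup_{\beta<\al}\om_{\beta+1}=\om_\al$.

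It remains to prove the combinatorial core, $\rpc\ell_p(\kappa^+)>\kappa$. By the reformulation of $\rpc$ for normed spaces it is enough to show that the full uniform covering $\mc U(1)$ of $\ell_p(\kappa^+)$ admits no uniform refinement of order $\le\kappa$; by homogeneity the radius is irrelevant. So I would argue by contradiction, assuming $\mc V$ is an $s$-uniform refinement of $\mc U(1)$ with $\ord\mc V\le\kappa$, i.e. every point lies in fewer than $\kappa$ members, while each member has diameter at most $2$ (being contained in some open unit ball). The geometric input is that the basis vectors $e_\gamma$, $\gamma<\kappa^+$, satisfy $\norm{e_\gamma-e_\delta}_p=2^{1/p}$ and that any open unit ball in $\ell_p$ contains only finitely many of them (if infinitely many $e_\gamma$ lay in $U(w,1)$ then $w_\gamma\to0$ forces $\norm w_p=0$, a contradiction); thus, choosing for each $\gamma$ a member $V_\gamma\in\mc V$ with $U(e_\gamma,s)\subset V_\gamma$, the assignment $\gamma\mapsto V_\gamma$ is finite-to-one, and each $V_\gamma$ carries a finite footprint $F_\gamma\subset\kappa^+$ recording the dominant coordinates of the centre of the unit ball containing it. I would then apply the $\Delta$-system lemma to $\{F_\gamma\setsep\gamma<\kappa^+\}$ at the regular cardinal $\kappa^+$, obtaining a subfamily of size $\kappa^+$ whose footprints form a $\Delta$-system with root $R$, and refine further so that the relevant coordinate patterns on $R$ stabilise. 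From this homogeneous subfamily the goal is to exhibit a single point $z$ lying in $\kappa$ of the corresponding members $V_\gamma$, which contradicts $\ord\mc V\le\kappa$.

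The main obstacle is exactly this last construction, and its delicate point is the passage from the balls to the refinement members: the $\Delta$-system step only places a candidate point $z$ in the open unit balls enclosing $\kappa$ of the $V_\gamma$, whereas membership in a ball does not imply membership in the possibly much smaller set $V_\gamma$. Bridging this gap is where $s$-uniformity must be used, by arranging $z$ to lie within $s$ of a point that $V_\gamma$ is forced to contain, and where the precise value $2^{1/p}$ of the inter-vector distances together with the radius of the covering enters quantitatively. The regularity of $\kappa^+$ is essential both for the $\Delta$-system/free-set step and to keep the stabilising refinements of full size $\kappa^+$; the remainder is the bookkeeping of the reduction above. This last inequality is the substantial content of \cite{PR92}.
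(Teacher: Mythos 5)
Your reduction of the lower bound to the successor statement $\rpc\ell_p(\kappa^+)>\kappa$ is a reduction to a \emph{false} statement, so the proposal fails at its structural core, before the $\Delta$-system sketch even begins. Taking $\kappa=\om$, your claim asserts $\rpc\ell_p(\om_1)>\om$; but Theorem~\ref{t:rpc-l1} gives $\rpc\ell_1(\om_n)=\om$ for every $n\in\N_0$, and Corollary~\ref{c:rpc-Lp,lattice} (third bullet, since $\om$ is regular) gives $\rpc\ell_p(\om_n)=\om$ for all $1\le p<\infty$ and all finite $n$ --- indeed this is precisely the point of \cite{AKR13}: each $\ell_p(\om_n)$ \emph{does} embed bi-Lipschitz into some $c_0(\Ga)$. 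The subspace monotonicity step (Fact~\ref{f:rpc_embedded}(b)) is fine, but $\rpc$ simply does not increase at successor cardinals below the next limit cardinal, so no argument along your lines can work; your intended $\Delta$-system construction would necessarily break down, e.g.\ at the point where membership in the enclosing balls must be upgraded to membership in the refinement members, because for small uniformity parameter $s$ there genuinely exist coverings of $\ell_p(\om_1)$ of countable order.

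The correct mechanism, visible in the paper's proof of the generalisation Theorem~\ref{t:cotype-rpc-lim} (the statement itself is quoted from \cite{PR92}), explains exactly why limitness of $\al$ is indispensable rather than a mere bookkeeping device. One fixes a $1$-bounded $\de$-uniform covering $\mc V$ and colours $n$-element subsets of a $\frac\de4$-separated family by tuples of covering members containing the $2^n$ signed sums $\sum_j\ve(j)x_{\ga_j}$; cotype forces disjoint $n$-sets to get distinct colours, where $n$ depends on $\de$ (via $\frac1K n^{1/q}\frac\de4>1$). Lemma~\ref{l:root} then needs density at least $\om_{\beta+n-1}$ to produce a point lying in $\om_\beta$ many members of $\mc V$. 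The ordinal shift $\beta\mapsto\beta+n-1$, with $n\to\infty$ as $\de\to0$, is exactly what your reduction ignores: from density $\om_{\beta+1}$ alone one cannot conclude order $>\om_\beta$ for coverings with small $\de$, which is why $\rpc\ell_1(\om_{\al+n})=\om_\al$ stalls at $\om_\al$ through finitely many successors. When $\al$ is a limit ordinal, $\beta+n-1<\al$ holds for every $\beta<\al$ and every finite $n$, so \emph{every} uniformly bounded uniform covering of $\ell_p(\om_\al)$ has order at least $\om_\beta$ for all $\beta<\al$, giving $\rpc\ell_p(\om_\al)\ge\om_\al$; your upper bound via Fact~\ref{f:rpc<=dens} is correct and matches the paper.
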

This means of course that $\rpc\ell_p(\om_\om)=\om_\om$, so $\ell_p(\om_\om)$ does not uniformly embed into any $c_0(\Ga)$.
We will generalise this result in Theorem~\ref{t:cotype-rpc-lim} below.
The main result in \cite{AKR13} is the following:
\begin{theorem}[\cite{AKR13}]\label{t:rpc-l1}
Let $\al$ be $0$ or a limit ordinal and $n\in\N_0$.
Then $\rpc\ell_1(\om_{\al+n})=\om_\al$.
\end{theorem}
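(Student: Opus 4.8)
The plan is to establish the two inequalities $\rpc\ell_1(\om_{\al+n})\ge\om_\al$ and $\rpc\ell_1(\om_{\al+n})\le\om_\al$ separately, the first being a quick consequence of the results already available and the second being proved by induction on $n$ with $\al$ fixed, the successor step carrying all the combinatorial content.

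For the lower bound, note that $\ell_1(\om_\al)$ is isometric to the subspace of $\ell_1(\om_{\al+n})$ of vectors supported on the first $\om_\al$ coordinates, so Fact~\ref{f:rpc_embedded}(b) gives $\rpc\ell_1(\om_{\al+n})\ge\rpc\ell_1(\om_\al)$. When $\al$ is a limit ordinal this already yields $\rpc\ell_1(\om_{\al+n})\ge\om_\al$ by Theorem~\ref{t:rpc-lp-lim}; when $\al=0$ it yields $\rpc\ell_1(\om_n)\ge\rpc\ell_1(\om)\ge\om=\om_0$, using that an infinite-dimensional normed space has $\rpc\ge\om$.

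For the upper bound I would induct on $n$. The base case $n=0$ is exactly $\rpc\ell_1(\om_\al)\le\om_\al$, which for $\al$ a limit ordinal is Theorem~\ref{t:rpc-lp-lim}, and for $\al=0$ follows from Aharoni's Theorem~\ref{t:embed-c0} (separability gives a bi-Lipschitz, hence uniform, embedding into $c_0$, whence $\rpc\ell_1(\om)\le\om$ by Theorem~\ref{t:rpc-embed}). The inductive step is to pass from $\rpc\ell_1(\mu)\le\om_\al$ to $\rpc\ell_1(\mu^+)\le\om_\al$, where $\mu=\om_{\al+n}$ and $\mu^+=\om_{\al+n+1}$. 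Using the reformulation of $\rpc$ for normed spaces, it suffices to produce, for the full covering $\mc U(r)$ of $\ell_1(\mu^+)$, a uniform refinement in which every point lies in fewer than $\om_\al$ members. I would filter $\ell_1(\mu^+)=\bigcup_{\xi<\mu^+}\ell_1(I_\xi)$ along a continuous increasing chain $(I_\xi)_{\xi<\mu^+}$ of subsets of $\mu^+$ with $\card I_\xi\le\mu$, so that $\rpc\ell_1(I_\xi)\le\om_\al$ by the induction hypothesis (and Fact~\ref{f:rpc_embedded}(b)); equip each slice with a bounded uniform covering of order $\le\om_\al$; and amalgamate these into a single covering of the whole space.

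The main obstacle is the amalgamation: the chain has $\mu^+$ levels, far more than $\cf\om_\al$, so Lemma~\ref{l:ord-partition} cannot merge them directly, and a naive union would inflate the order at points belonging to the new part of many levels at once. The resolution exploits the geometry of $\ell_1$: a point $x$ of $\mc U(r)$ has countable support $\supp x$, and every centre $c$ with $x\in B(c,r)$ satisfies $\sum_{i\notin\supp x}\abs{c_i}\le r$; thus all centres relevant to $x$ are, up to a uniformly small error, supported on the single countable set $\supp x$, so they live in an essentially separable slice on which the induction hypothesis already forces point-degree below $\om_\al$. After a Mary~Ellen~Rudin style well-ordering and truncation of the amalgamated covering, in the spirit of Lemma~\ref{l:ord-partition}, one converts this local smallness into the global bound $\ord\le\om_\al$. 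Making the counting precise, so that the surviving point-degree provably stays below $\om_\al$ uniformly along the whole $\mu^+$-chain while the refinement remains $r$-uniform, is the crux, and it is here that the regularity of the successor cardinal $\mu^+$ and the level-wise induction hypothesis are the two levers. Combining the two inequalities then gives $\rpc\ell_1(\om_{\al+n})=\om_\al$.
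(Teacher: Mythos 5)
Your lower bound and your base case $n=0$ are fine, but the inductive step contains the entire content of the theorem and it is missing. Note first that this paper does not prove the statement at all: it is quoted from \cite{AKR13}, and the proof there rests on a genuinely hard combinatorial theorem about colourings of finite sets (the ``colorful flowers'' theorem, \cite[Theorem~3]{AKLR}; the paper even remarks that a parameter in that theorem needs correcting). Any self-contained proof must supply a substitute for that combinatorial core, and your sketch explicitly defers it (``Making the counting precise\dots is the crux''). Already the first nontrivial instance, $\rpc\ell_1(\om_1)=\om$, does not follow from anything available in this paper, so the deferral is not a routine verification but the theorem itself.

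Moreover, the chain amalgamation fails earlier than the point where you locate the obstacle. A covering $\mc W_\xi$ of the slice $\ell_1(I_\xi)$ consists of subsets of that slice, which has empty interior in $\ell_1(\mu^+)$; hence no member of any $\mc W_\xi$ contains a ball of the ambient space, and $\bigcup_{\xi<\mu^+}\mc W_\xi$ is not even a uniform covering of $\ell_1(\mu^+)$, regardless of order. You can restore uniformity by thickening, $W\mapsto W+U(0,\ve)$ (the coordinate projection estimate $\norm{z\restr{I_\xi}-y}\le\norm{z-y}$ in $\ell_1$ shows thickened slice-coverings capture balls around points close to the slice), but then every $x$ lies in the slice $\ell_1(I_\xi)$ for \emph{all} $\xi\ge\xi(x)$, so it meets thickened sets from $\mu^+$-many levels; Lemma~\ref{l:ord-partition} only merges $\cf\om_\al$-many families, and the Rudin truncation bounds the relevant levels at $x$ only up to the first level whose member swallows a full ambient ball around $x$, which can sit cofinally high in the chain. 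Your geometric remark about centres being almost supported on $\supp x$ does not repair this: the covering sets are not balls, and membership of $x$ in a set is not controlled by where nearby centres are supported --- one would need the sets themselves to be determined by countably many coordinates in a coherent way across $\mu^+$ levels, and arranging exactly that, with the order bound $\om_\al$ surviving the $n$-fold cardinal drop, is what the flower-free colourings of \cite{AKLR} accomplish in the actual proof of \cite{AKR13}.
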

Hence for $\al=0$ and any $n\in\N_0$ we get $\rpc\ell_1(\om_n)=\om$, i.e. each of these spaces admit a bi-Lipschitz embedding into some $c_0(\Ga)$.
Let us remark that the proof of the above theorem is based on a combinatorial result \cite[Theorem~3]{AKLR}, cf. \cite[Theorem~4]{AKR13}.
There seems to be a small mistake in this result: apparently there should be $q=\lfloor\frac{k-1}l\rfloor$ instead of $q=\lceil\frac{k-1}l\rceil$ in its statement.
This means that in fact the cardinal $\varphi(k,l,\al)$ is in most cases the successor of the cardinal given in \cite[Theorem~3]{AKLR}.
Nevertheless, the proof of Theorem~\ref{t:rpc-l1} in \cite{AKR13} can be easily fixed by properly redefining the parameters that relate to \cite[Theorem~3]{AKLR}.
We will extend this result in Corollary~\ref{c:rpc-Lp,lattice} below.

\begin{theorem}\label{t:cotype-rpc-lim}
Let $X$ be a normed linear space of a non-trivial cotype with $\dens X\ge\om_\al$, where $\al$ is a limit ordinal.
Then $\rpc X\ge\rpc B_X\ge\om_\al$.
Consequently, if $\dens X=\om_\al$, then $\rpc X=\rpc B_X=\om_\al$.
\end{theorem}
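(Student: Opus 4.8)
The plan is to reduce the whole statement to the single inequality $\rpc B_X\ge\om_\al$. Granting it, the rest is formal: since $B_X\subset X$, Fact~\ref{f:rpc_embedded}(b) gives $\rpc X\ge\rpc B_X\ge\om_\al$; and if $\dens X=\om_\al$, then Fact~\ref{f:rpc<=dens} applied to the metric space $B_X$ together with $\dens B_X=\dens X$ forces $\rpc B_X\le\om_\al$ and $\rpc X\le\om_\al$, so all three quantities coincide with $\om_\al$. Note that, because $\al$ is a limit ordinal, $\om_\al=\sup_{\ga<\al}\om_\ga$ is possibly singular; this is exactly why I would argue directly at the level of the ball and feed the outcome into Fact~\ref{f:rpc_embedded}(b), rather than attempt to route through Theorem~\ref{t:rpc-ball}, whose regularity hypothesis is unavailable here.

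For the core inequality I would compare $B_X$ with a ball of $\ell_q(\Lambda)$, exploiting that $\rpc\ell_q(\om_\al)=\om_\al$ by Theorem~\ref{t:rpc-lp-lim} (here $\al$ being a limit ordinal is essential, since for non-limit indices the value drops). Fix a finite cotype $q$ of $X$ with cotype constant $C$. As $\dens X\ge\om_\al$, a maximal-separated-set (Riesz type) argument produces a family $\{x_\lambda\}_{\lambda\in\Lambda}\subset S_X$ with $\card\Lambda=\om_\al$ that is $\tfrac12$-separated. Applied to an arbitrary finite $F\subset\Lambda$, the cotype inequality delivers, essentially for free, the lower estimate
\[(\card F)^{1/q}=\Bigl(\sum_{\lambda\in F}\norm{x_\lambda}^q\Bigr)^{1/q}\le C\,\Bigl(\text{Ave}_\ve\,\norma{\textstyle\sum_{\lambda\in F}\ve_\lambda x_\lambda}^2\Bigr)^{1/2},\]
so that a typical signed combination of the $x_\lambda$ is at least as spread out as the matching vector of $\ell_q(\Lambda)$. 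The goal is then to transport the bounded, refinement-resisting uniform covering of $\ell_q(\om_\al)$ underlying Theorem~\ref{t:rpc-lp-lim} onto a subset $Y\subset B_X$ assembled from suitably scaled combinations $\sum_{\lambda\in F}\ve^F_\lambda x_\lambda$, and to conclude $\rpc B_X\ge\rpc Y\ge\om_\al$ via Fact~\ref{f:rpc_embedded}(a),(b).

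The main obstacle is that cotype furnishes only a one-sided, averaged estimate. It bounds below how far apart the combinations of the $x_\lambda$ must be, but provides no companion upper bound: a type-like inequality $\norma{\sum_{\lambda\in F}\ve_\lambda x_\lambda}\lesssim(\card F)^{1/p}$ is false in general, and cotype alone does not embed $\ell_q(\om_\al)$ isomorphically into $X$. Hence I would not expect a literal uniform embedding of $B_{\ell_q(\om_\al)}$ into $B_X$; instead the real substance of the proof is to re-run the lower-bound half of the Pelant--Rödl and Avart--Komjáth--Rödl arguments intrinsically inside $B_X$, using only the cotype lower spread together with the trivial triangle-inequality upper bound. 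Since that argument is local---each intersection tested in a candidate refinement involves only finitely many of the $x_\lambda$---the averaged inequality above should supply exactly the separation required at every scale, once the signs $\ve^F_\lambda$ are chosen on each finite configuration so as to realise (up to a constant) the corresponding average. Organising this choice of sign patterns coherently across the whole combinatorial scheme, so that the covering witnessing $\rpc\ell_q(\om_\al)=\om_\al$ continues to resist refinement after transfer to $B_X$, is the step I expect to be hardest.
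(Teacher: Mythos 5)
You have correctly isolated the two halves of the problem, but both halves of your plan have gaps. First, the reduction: your reason for avoiding Theorem~\ref{t:rpc-ball} is mistaken, and avoiding it costs you the ball inequality. Since $\al$ is a limit ordinal, regular cardinals are cofinal below $\om_\al$: if $\rpc B_X<\om_\al$, then $\rpc B_X\le\lambda$ for some infinite \emph{regular} $\lambda<\om_\al$ (take a successor cardinal), and Theorem~\ref{t:rpc-ball} then gives $\rpc X\le\lambda<\om_\al$. So the correct logic (and the paper's) is to prove $\rpc X\ge\om_\al$ on the \emph{whole space} and derive $\rpc B_X\ge\om_\al$ by this contradiction. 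Your plan to argue directly inside $B_X$ instead runs into a quantitative obstruction: a uniform covering of $B_X$ is $\de$-uniform for some small $\de$ and automatically $2$-bounded, so to make the cotype separation beat the diameter bound you need combinations of $n\gtrsim(K/\de)^q$ vectors of norm comparable to $\de$ (smaller vectors are useless, since the covering only ``sees'' scale $\de$); but then $\normb{big}{\sum_{\lambda\in F}\ve_\lambda x_\lambda}$ can only be controlled by $n\de\sim\de^{1-q}$, which escapes $B_X$ as $\de\to0$ whenever $q>1$ (and cotype forces $q\ge2$). The configurations you need simply do not fit in the ball at small scales; homogeneity of the whole space (fix the bound at $1$, let $\de$ be arbitrary, choose $n$ freely) is what makes the argument run.

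Second, the combinatorial core, which you explicitly defer as ``the step I expect to be hardest,'' is exactly the content that must be supplied, and it is not a transfer of the covering witnessing $\rpc\ell_q(\om_\al)=\om_\al$ (you rightly note no embedding is available, but then no substitute mechanism is given). The paper's mechanism is a colouring argument: fix $\beta<\al$, take a $\frac\de4$-separated family $\{x_\ga\}_{\ga\in\Ga}\subset S(0,\frac\de3)$ with $\card\Ga=\dens X$, and colour each $A\in[\Ga]^n$ by the full $2^n$-tuple $f(A)\in\mc V^{2^n}$ of covering sets absorbing the $\de$-balls around \emph{all} signed combinations $y_{A,i}=\sum_j\ve_i(j)x_{\ga_j}$ --- so no coherent global ``choice of sign patterns'' is needed at all; every pattern is carried in the colour and a pigeonhole selects the useful coordinate afterwards. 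Cotype is applied to the differences $x_{\ga_j}-x_{\zeta_j}$ (not to the $x_\lambda$ themselves, as in your displayed inequality, which estimates the spread of a single combination rather than the separation of $y_{A,i}$ from $y_{B,i}$ for disjoint $A,B$) to show disjoint sets receive different colours. The missing ingredient in your sketch is then Lemma~\ref{l:root} (Pelant--Rödl): it produces $\mc H\subset[\Ga]^n$ of cardinality $\om_\beta$ with core $\bigcap\mc H$ of size $n-1$ on which $f$ is injective, and after pigeonholing over the position of the free petal and over the $2^n$ coordinates one exhibits a single point $y_{B,m}$ lying in $\om_\beta$ pairwise distinct members of $\mc V$, since the petal vectors have norm $\frac\de3$ and hence the relevant combinations differ by less than $\de$. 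Without this lemma or an explicit substitute, your proposal identifies the right difficulty but does not resolve it.
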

For the proof we need the following combinatorial lemma from \cite{PR92}.
\begin{lemma}[{\cite[Lemma~2]{PR92}}]\label{l:root}
Let $\beta$ be an ordinal and $n\in\N$.
Let $\Ga$ be a set of cardinality at least $\om_{\beta+n-1}$, let $\mc C$ be any set, and let $f\colon[\Ga]^n\to\mc C$ be a colouring such that $f(A)\neq f(B)$ whenever $A,B\in[\Ga]^n$ are disjoint.
Then there exists $\mc H\subset[\Ga]^n$ of cardinality $\om_\beta$ such that $\card\bigcap\mc H=n-1$ and $f\restr{\mc H}$ is one-to-one.
\end{lemma}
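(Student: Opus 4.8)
The plan is to restate the conclusion in a more workable form and then induct on~$n$ (keeping $\beta$ fixed). Writing $C_R=\{f(R\cup\{x\})\setsep x\in\Ga\setminus R\}$ for a prospective root $R\in[\Ga]^{n-1}$, the desired family $\mc H$ exists precisely when some $R\in[\Ga]^{n-1}$ satisfies $\card C_R\ge\om_\beta$: given such an $R$ one picks, for $\om_\beta$ distinct colours in $C_R$, one petal $x$ realising each, and sets $\mc H=\{R\cup\{x\}\setsep x\in Y\}$, which has $\card\mc H=\om_\beta$, common part $\bigcap\mc H=R$ of size $n-1$, and $f\restr{\mc H}$ one-to-one. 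So the lemma reduces to producing a root $R$ with $\card C_R\ge\om_\beta$. For the base case $n=1$ the root $R=\emptyset$ works, since the colouring condition forces $x\mapsto f(\{x\})$ to be injective, whence $\card C_\emptyset=\card\Ga\ge\om_\beta$.

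For the inductive step I would assume the statement for $n-1$ (and all ordinals) and, replacing $\Ga$ by a subset, take $\Ga=\om_{\beta+n-1}$. If some $R\in[\Ga]^{n-1}$ already has $\card C_R\ge\om_\beta$ we are done, so suppose toward a contradiction that $\card C_R<\om_\beta$ for \emph{every} $R\in[\Ga]^{n-1}$. Fix a large enough regular $\theta$ and an elementary submodel $M\prec H(\theta)$ with $f,\Ga\in M$, $\card M=\om_{\beta+n-2}$, and $\om_{\beta+n-2}\subseteq M$ (in particular $\om_\beta\subseteq M$); since $\card\Ga>\card M$ there is a point $p\in\Ga\setminus M$. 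Consider the sliced colouring $f_p\colon[\Ga\cap M]^{n-1}\to\mc C$ given by $f_p(A)=f(A\cup\{p\})$. The ground set $\Ga\cap M$ has cardinality exactly $\om_{\beta+n-2}=\om_{\beta+(n-1)-1}$, so once $f_p$ is shown to send disjoint sets to distinct colours, the induction hypothesis yields a root $R'\in[\Ga\cap M]^{n-2}$ with $\card\{f_p(R'\cup\{x\})\setsep x\in(\Ga\cap M)\setminus R'\}\ge\om_\beta$; then $R=R'\cup\{p\}$ (of size $n-1$, as $p\notin R'$) satisfies $\card C_R\ge\om_\beta$, contradicting our assumption and completing the step.

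The heart of the argument — and the step I expect to be the main obstacle — is verifying that $f_p$ inherits the colouring condition, i.e. that disjoint $A,B\in[\Ga\cap M]^{n-1}$ have $f(A\cup\{p\})\ne f(B\cup\{p\})$. This is where the submodel does the work, by a reflection trick. Suppose $f(A\cup\{p\})=f(B\cup\{p\})=c$. Since $A\in M$, the set $C_A$ is an element of $M$ of cardinality $<\om_\beta$, and because $\om_\beta\subseteq M$ any such set is contained in $M$; as $c\in C_A$ this gives $c\in M$. Now $Y_A=\{y\in\Ga\setminus A\setsep f(A\cup\{y\})=c\}$ is an element of $M$ (all parameters lie in $M$) that contains $p\notin M$, so $Y_A\not\subseteq M$; by the same fact a subset of $M$ of cardinality below $\om_\beta$ would lie in~$M$, hence $\card Y_A\ge\om_\beta$, and likewise $\card Y_B\ge\om_\beta$. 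As $A,B$ are finite I can therefore pick $y_A\in Y_A\setminus B$ and then $y_B\in Y_B\setminus(A\cup\{y_A\})$, avoiding the finitely many forbidden points, so that $A\cup\{y_A\}$ and $B\cup\{y_B\}$ are two \emph{disjoint} $n$-subsets of $\Ga$ with $f(A\cup\{y_A\})=f(B\cup\{y_B\})=c$ — contradicting the hypothesis on $f$. Thus no collision occurs and $f_p$ is a legitimate colouring for the induction. Two features of this scheme are worth stressing: the cardinal index drops by exactly one successor at each step (from $\om_{\beta+n-1}$ to $\om_{\beta+n-2}$), matching the arithmetic in the statement, and nowhere is regularity of $\om_\beta$ used, so the proof also covers limit~$\beta$, i.e. singular~$\om_\beta$.
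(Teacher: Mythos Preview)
The paper does not give its own proof of this lemma; it simply quotes it from \cite[Lemma~2]{PR92}. Your argument is correct and self-contained. The induction on~$n$, the reformulation via roots $R$ with $\card C_R\ge\om_\beta$, and the elementary-submodel reflection step all go through as written; in particular, the key point that $C_A\in M$ together with $\card C_A<\om_\beta$ and $\om_\beta\subseteq M$ forces $C_A\subseteq M$ (hence $c\in M$, hence $Y_A\in M$, hence $\card Y_A\ge\om_\beta$) is exactly right. One cosmetic slip: where you write ``a subset of $M$ of cardinality below $\om_\beta$ would lie in~$M$'' you mean ``an \emph{element} of $M$ of cardinality below $\om_\beta$ is a \emph{subset} of~$M$''; the intended contrapositive (applied to $Y_A\in M$ with $Y_A\not\subseteq M$) is clear from context.

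As for comparison, the original argument in \cite{PR92} proceeds by the same induction on~$n$ but is phrased in purely combinatorial terms (building the root one point at a time and using pigeonhole/regularity of the successor cardinals $\om_{\beta+k}$), without elementary submodels. Your packaging via $M\prec H(\theta)$ is a standard and arguably cleaner way to encode the same bookkeeping: the submodel simultaneously handles the ``small sets are absorbed'' step and supplies the new point $p\notin M$, which in the direct proof corresponds to choosing an element outside a carefully assembled set of size $\om_{\beta+n-2}$. Both approaches drop the cardinal index by exactly one successor per step and neither needs regularity of~$\om_\beta$, so your closing remarks are on point.
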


\begin{proof}[Proof of Theorem~\ref{t:cotype-rpc-lim}]
To prove the inequalities it suffices to show that $\rpc X\ge\om_\al$.
Indeed, if $\rpc B_X<\om_\al$, then there is an infinite regular cardinal $\lambda$ such that $\rpc B_X\le\lambda<\om_\al$ and hence $\rpc X\le\lambda$ by Theorem~\ref{t:rpc-ball}, a contradiction.

To prove that $\rpc X\ge\om_\al$ it suffices to show that $\ord\mc V\ge\om_\al$ for any $1$-bounded uniform covering $\mc V$ of $X$.
So let $\mc V$ be a $1$-bounded uniform covering of $X$.
It suffices to show that for any ordinal $\beta<\al$ there is a point in $X$ contained in $\om_\beta$ sets from~$\mc V$.
So let $\beta<\al$.
Further, let $\de>0$ be such that $\mc V$ is $\de$-uniform.
Let $q\ge2$ be such that $X$ is of cotype $q$ with cotype-$q$ constant $C>0$ and set $K=K_{1,q}C$, where $K_{1,q}$ is Kahane's constant for exponents~$q$ and~$1$ (see e.g. \cite[Theorem~2.76]{HJ}).
Let $n\in\N$ be such that $\frac1K\sqrt[q]n\frac\de4>1$.

Let $\{x_\ga\}_{\ga\in\Ga}\subset S(0,\frac\de3)$ be a $\frac\de4$-separated set with $\Ga=\dens X$ (see e.g. \cite[Fact~6.65]{HJ}).
Let $\{\ve_1,\ve_2,\dotsc,\ve_{2^n}\}$ be some fixed enumeration of the set $\{-1,1\}^n$.
We define a mapping $f\colon[\Ga]^n\to\mc V^{2^n}$ in the following way:
Let $A\in[\Ga]^n$ and let $\ga_1<\ga_2<\dotsb<\ga_n$ be such that $A=\{\ga_1,\dotsc,\ga_n\}$.
Put $y_{A,i}=\sum_{j=1}^n\ve_i(j)x_{\ga_j}$, $i=1,\dotsc,2^n$.
Then we set $f(A)=(V_1,\dotsc,V_{2^n})$, where $V_i\in\mc V$ are some sets such that $U(y_{A,i},\de)\subset V_i$, $i=1,\dotsc,2^n$.

Now assume that $A,B\subset[\Ga]^n$ are disjoint such that $f(A)=f(B)$.
Let $\ga_1<\ga_2<\dotsb<\ga_n$, resp. $\zeta_1<\zeta_2<\dotsb<\zeta_n$ be such that $A=\{\ga_1,\dotsc,\ga_n\}$, resp. $B=\{\zeta_1,\dotsc,\zeta_n\}$.
Then $y_{A,i}-y_{B,i}=\sum_{j=1}^n\ve_i(j)(x_{\ga_j}-x_{\zeta_j})$, $i=1,\dotsc,2^n$, and so there is $i\in\{1,\dotsc,2^n\}$ such that
\[
\norm{y_{A,i}-y_{B,i}}\ge\frac1K\Biggl(\,\sum_{j=1}^n\norm{x_{\ga_j}-x_{\zeta_j}}^q\Biggr)^{\frac1q}\ge\frac1K\biggl(n\frac{\de^q}{4^q}\biggr)^{\frac1q}>1.
\]
Since $\mc V$ is $1$-bounded, it follows that there is no $V\in\mc V$ such that $y_{A,i}\in V$ and $y_{B,i}\in V$, which contradicts the fact that $f(A)=f(B)$.

Consequently, the assumptions of Lemma~\ref{l:root} are satisfied (note that $\beta+n-1<\al$, since $\al$ is a limit ordinal).
Thus there exists $\mc H\subset[\Ga]^n$ of cardinality $\om_\beta$ such that $\card\bigcap\mc H=n-1$ and $f\restr{\mc H}$ is one-to-one.
Denote $A=\bigcap\mc H$ and let $\ga_1<\ga_2<\dotsb<\ga_{n-1}$ be such that $A=\{\ga_1,\dotsc,\ga_{n-1}\}$.
Denote $D=\{\ga\in\Ga\setsep A\cup\{\ga\}\in\mc H\}$.
Put $\ga_0=0$ and $\ga_n=\Ga$.
Since $\card D=\om_\beta$, there is $k\in\{1,\dotsc,n\}$ such that the ordinal interval $(\ga_{k-1},\ga_k)$ contains $\om_\beta$ elements of~$D$.
Denote $E=D\cap(\ga_{k-1},\ga_k)$.
Since $f\restr{\mc H}$ is one-to-one, it follows that the set $\mc E=\{f(A\cup\{\ga\})\setsep\ga\in E\}\subset\mc V^{2^n}$ has cardinality $\om_\beta$.
Therefore there is $m\in\{1,\dotsc,2^n\}$ such that the projection of $\mc E$ to the $m$th coordinate has cardinality $\om_\beta$
(otherwise $\mc E$ would be a subset of a finite cartesian product of sets with cardinality smaller that $\om_\beta$).
By taking one element in the preimage of each point of this projection we conclude that
there is $F\subset E$ of cardinality $\om_\beta$ such that $f(A\cup\{\ga\})_m\neq f(A\cup\{\zeta\})_m$ whenever $\ga,\zeta\in F$, $\ga\neq\zeta$.
Pick any $\zeta\in F$ and set $B=A\cup\{\zeta\}$.
Since
\[
\norm{y_{B,m}-y_{A\cup\{\ga\},m}}=\norm{\ve_m(k)x_\zeta-\ve_m(k)x_\ga}\le\norm{x_\zeta}+\norm{x_\ga}<\de
\]
for any $\ga\in F$, it follows that $y_{B,m}\in f(A\cup\{\ga\})_m\subset\mc V$ for every $\ga\in F$.
All of these sets are however different and this concludes the proof of the inequality $\ord\mc V\ge\om_\al$.

Finally, if $\dens X=\om_\al$, then we may apply Fact~\ref{f:rpc<=dens}.
\end{proof}

Combining the previous theorem with Theorem~\ref{t:embedc0-char} we obtain the following corollary:
\begin{corollary}\label{c:cotype-no_emb}
If $X$ is a normed linear space of a non-trivial cotype and $\dens X\ge\om_\om$, then $X$ does not embed uniformly or coarsely into any $c_0(\Ga)$.
\end{corollary}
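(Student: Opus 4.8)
The plan is to combine the lower bound on $\rpc X$ furnished by Theorem~\ref{t:cotype-rpc-lim} with the embeddability characterisation of Theorem~\ref{t:embedc0-char}. First I would write $\om_\om=\om_\al$ with $\al=\om$ and observe that $\al=\om$ is a limit ordinal. Since $X$ has non-trivial cotype and $\dens X\ge\om_\om=\om_\al$, the hypotheses of Theorem~\ref{t:cotype-rpc-lim} are met, and that theorem immediately gives $\rpc X\ge\rpc B_X\ge\om_\om$.

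Next I would note that $\om_\om>\om$, so in particular $\rpc X>\om$; that is, condition~(i) of Theorem~\ref{t:embedc0-char} fails for $X$. By the equivalences (i)$\Leftrightarrow$(iv) and (i)$\Leftrightarrow$(v) recorded in that theorem, the failure of~(i) forces the failure of both~(iv) and~(v). Hence $X$ admits neither a coarse nor a uniform embedding into any $c_0(\Ga)$, which is exactly the assertion of the corollary.

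Since the statement is a direct deduction from two previously established theorems, I do not anticipate a genuine obstacle. The only point deserving a moment of care is the bookkeeping observation that $\al=\om$ indeed qualifies as a limit ordinal, so that Theorem~\ref{t:cotype-rpc-lim} is applicable; once this is noted the argument is immediate. It is worth emphasising that the proof uses only the lower bound $\rpc X\ge\om_\om$ valid under $\dens X\ge\om_\om$, and not the sharper density-equals-$\om_\al$ clause of Theorem~\ref{t:cotype-rpc-lim}.
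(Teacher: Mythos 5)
Your proposal is correct and coincides with the paper's own (one-line) proof: the corollary is stated there precisely as the combination of Theorem~\ref{t:cotype-rpc-lim} (applied with the limit ordinal $\al=\om$, using only the lower bound $\rpc X\ge\om_\om>\om$) with the equivalences (i)$\Leftrightarrow$(iv)$\Leftrightarrow$(v) of Theorem~\ref{t:embedc0-char}. Your added bookkeeping remarks (that $\om$ is a limit ordinal and that the sharper $\dens X=\om_\al$ clause is not needed) are accurate.
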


On the other hand, in the positive direction we have the following:
\begin{theorem}\label{t:embed_H-rpc}
Let $X$ be a metric space that embeds uniformly into a Hilbert space and let $\dens X=\om_{\al+n}$, where $\al$ is $0$ or a limit ordinal and $n\in\N_0$.
Then $\rpc X\le\om_\al$.
In particular, if $\dens X<\om_\om$, then $X$ embeds uniformly into $c_0(\Ga)$ for some set $\Ga$.
\end{theorem}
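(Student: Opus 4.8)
The plan is to transport the problem from the Hilbert space to $\ell_1(\om_{\al+n})$, where Theorem~\ref{t:rpc-l1} applies. First I would reduce to the linear case: if $f\colon X\to H$ is a uniform embedding into a Hilbert space, then $X$ is uniformly homeomorphic to $Y=f(X)$, and since density is a topological invariant, $\dens Y=\dens X=\om_{\al+n}$. Replacing $H$ by the closed linear span of $Y$, which has density $\om_{\al+n}$, I may assume $H$ is isometric to $\ell_2(\om_{\al+n})$; by Fact~\ref{f:rpc_embedded} then $\rpc X=\rpc Y\le\rpc H=\rpc\ell_2(\om_{\al+n})$. Thus it suffices to prove $\rpc\ell_2(\Ga)\le\om_\al$ for $\Ga=\om_{\al+n}$.

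The key step is to embed the unit ball of $\ell_2(\Ga)$ uniformly into $\ell_1(\Ga)$ by the coordinatewise Mazur-type map $\Phi(x)=(x_\ga\abs{x_\ga})_{\ga\in\Ga}$. Since $\norm{\Phi(x)}_1=\norm x_2^2$, this sends $B_{\ell_2(\Ga)}$ into $B_{\ell_1(\Ga)}$; the elementary inequality $\abs{a\abs a-b\abs b}\le(\abs a+\abs b)\abs{a-b}$ shows that $\Phi$ is Lipschitz on $B_{\ell_2(\Ga)}$, while a similarly elementary estimate shows that its inverse, the coordinatewise signed square root, is $\frac12$-Hölder. Hence $\Phi$ is a uniform embedding of $B_{\ell_2(\Ga)}$ into $\ell_1(\Ga)$, and Fact~\ref{f:rpc_embedded} together with Theorem~\ref{t:rpc-l1} gives $\rpc B_{\ell_2(\Ga)}\le\rpc\ell_1(\Ga)=\om_\al$. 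Note that a purely coordinatewise map cannot reproduce the $\ell_2$ norm globally (this is why $\ell_2$ and $\ell_1$ are not uniformly homeomorphic), so it is essential here that we work on the bounded set $B_{\ell_2(\Ga)}$, where $\Phi$ is genuinely bi-uniformly continuous.

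It remains to pass from the ball to the whole space. For $\al=0$ the cardinal $\om$ is regular, so Theorem~\ref{t:rpc-ball} yields $\rpc\ell_2(\om_n)\le\om$ at once; this already proves the final clause, for $\dens X<\om_\om$ forces $\dens X=\om_n$ for some $n\in\N_0$, whence $\rpc X\le\om$ and Theorem~\ref{t:rpc-embed} furnishes the uniform embedding into some $c_0(\Ga)$. The main obstacle is the case of limit $\al$: then $\om_\al$ is singular and Theorem~\ref{t:rpc-ball} does not apply, its proof using $\cf\al=\al$ precisely when bounding the order of the amalgamated covering by a union of $\card\mc A<\al$ sets each of size $<\al$. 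To circumvent this I would invoke the classical fact that an infinite-dimensional Hilbert space is uniformly homeomorphic to its unit ball; by Fact~\ref{f:rpc_embedded}(a) this gives $\rpc\ell_2(\Ga)=\rpc B_{\ell_2(\Ga)}\le\om_\al$ regardless of the cofinality of $\om_\al$. An alternative would be to build a global uniform embedding $\ell_2(\Ga)\hookrightarrow\ell_1(\Ga')$ with $\card\Ga'=\card\Ga$ by amalgamating the bounded Mazur maps over dyadic shells, but either way the singular target cardinal is the genuinely delicate point, and the ball-to-space transfer together with the elementary Mazur estimates is the crux of the argument.
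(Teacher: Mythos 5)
Your reduction to $\rpc\ell_2(\om_{\al+n})\le\om_\al$, the Mazur-map estimates on the ball, and the treatment of the case of regular $\om_\al$ (in particular $\al=0$, which suffices for the final clause via Theorems~\ref{t:rpc-ball} and~\ref{t:rpc-embed}) are all correct. But there is a genuine gap exactly where you locate the difficulty: the ``classical fact'' you invoke for limit $\al$ --- that an infinite-dimensional Hilbert space is uniformly homeomorphic to its unit ball --- is false. If $g\colon H\to B_H$ were a uniform homeomorphism, then $g^{-1}$ would be uniformly continuous on $B_H$; since $B_H$ is convex (hence metrically convex) and bounded, the standard chain argument (subdivide the segment from $x$ to $y$ into $\lceil 2/\de\rceil$ pieces of length at most $\de$) shows that any uniformly continuous image of $B_H$ is bounded, whereas $g^{-1}(B_H)=H$ is unbounded. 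So no such homeomorphism exists, even for separable $\ell_2$. What is true is strictly weaker: $H$ admits a uniform \emph{embedding} into its unit sphere $S_H$ --- and this, not a homeomorphism, is the ingredient that saves the singular case. Your fallback sketch (amalgamating bounded Mazur maps over dyadic shells) does not substitute for it: controlling uniform continuity and compression across shell boundaries is precisely the unresolved difficulty, and the only route to a global uniform embedding $\ell_2(\Ga)\to\ell_1(\Ga)$ again passes through the sphere.

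The paper's proof runs as follows, with no case distinction on the cofinality of $\om_\al$: by the Aharoni--Maurey--Mityagin theorem (\cite{AMM85}, see \cite[Corollary~8.11]{BenLin}), a metric space that embeds uniformly into a Hilbert space embeds uniformly into $S_H$ for an infinite-dimensional $H$ with $\dens H=\dens X$; composing with the Mazur homeomorphism of spheres sends $X$ uniformly into $S_{\ell_1(\Ga)}$ with $\card\Ga=\om_{\al+n}$, and then Fact~\ref{f:rpc_embedded} and Theorem~\ref{t:rpc-l1} give $\rpc X\le\rpc\ell_1(\Ga)=\om_\al$ directly --- Theorem~\ref{t:rpc-ball} and regularity are never needed. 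Indeed, the paper remarks immediately after its proof that the ability to push all of $X$ into a sphere (equivalently, a ball) of $H$ is ``the most important ingredient''; your proposal reconstructs everything around this point but replaces the one essential fact by a false one. (Your preliminary detour through $\rpc\ell_2(\Ga)$ is harmless but unnecessary: AMM applies to the metric space $X$ itself.)
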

\begin{proof}
Let $\Phi\colon X\to H$ be a uniform embedding into some Hilbert space $H$.
By considering $\cspan\Phi(X)$ we may assume that $\dens H=\dens X$ and $\dim H=\infty$.
By \cite{AMM85} (see \cite[Corollary~8.11]{BenLin}) the space $X$ embeds uniformly into $S_H$.
Using the classical Mazur mapping (\cite{M29}, see \cite[Theorem~9.1]{BenLin}) it follows that the space $X$ embeds uniformly into $S_{\ell_1(\Ga)}$ with $\card\Ga=\dens H=\om_{\al+n}$.
Then $\rpc X\le\om_\al$ by Theorem~\ref{t:rpc-l1} and Fact~\ref{f:rpc_embedded}.
If $\dens X<\om_\om$, then $\rpc X\le\om$, and so $X$ embeds uniformly into $c_0(\Ga)$ by Theorem~\ref{t:rpc-embed}.
\end{proof}

We would like to stress that the most important ingredient of the proof above is the fact that we can actually embed $X$ into a sphere of $H$ (or, equivalently, a ball),
which allows us to forward it into $\ell_1(\Ga)$ via the classical Mazur mapping.
Another approach is via Theorem~\ref{t:rpc-ball}.
For this we will make use of the next two results.
The first one was shown in the separable case in~\cite{OS94}.
The proof in the non-separable case is essentially the same, as we explain below.
\begin{theorem}\label{t:cotype_basis-embed_l1}
Let $X$ be a Banach space of a non-trivial cotype with a (long) unconditional basis $\{e_\ga\}_{\ga\in\Ga}$.
Then $B_X$ is uniformly homeomorphic to $B_{\ell_1(\Ga)}$.
\end{theorem}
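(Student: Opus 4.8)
The plan is to reduce the statement to a purely lattice-theoretic construction and then to transport a Mazur-type uniform homeomorphism from the separable model of \cite{OS94} into the long-basis setting. First I would renorm $X$ so that the basis $\{e_\ga\}_{\ga\in\Ga}$ is $1$-unconditional; a renorming changes neither the cotype class nor, up to a bi-Lipschitz homeomorphism of balls, the conclusion, so this is harmless. With a $1$-unconditional basis fixed, $X$ becomes a Banach lattice whose order is coordinatewise and for which $\abs{x}=\sum_\ga\abs{a_\ga}e_\ga$ when $x=\sum_\ga a_\ga e_\ga$, the norm being a lattice norm. Since $X$ has non-trivial cotype, standard Banach lattice theory yields a finite $q\ge2$ for which $X$ is $q$-concave; in particular, for pairwise disjointly supported vectors $x_1,\dots,x_m$ one has the lower estimate $\bigl(\sum_i\norm{x_i}^q\bigr)^{1/q}\le C\bigl\|\sum_i x_i\bigr\|$ with $C$ the concavity constant, while the triangle inequality together with unconditionality gives the trivial upper estimate $\bigl\|\sum_i x_i\bigr\|\le\sum_i\norm{x_i}$. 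Thus on disjoint blocks $X$ lies between an $\ell_q$ lower bound and an $\ell_1$ upper bound, and it is exactly this pair of estimates, and nothing about the cardinality of $\Ga$, that will drive every modulus of continuity below.

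The map I would build follows the separable scheme intrinsically, from the lattice operations. Because the basis is $1$-unconditional, it suffices to produce a uniform homeomorphism between the positive parts $B_X\cap X_+$ and $B_{\ell_1(\Ga)}\cap\ell_1(\Ga)_+$ and then to extend across sign patterns as in the classical Mazur map; the extension $x\mapsto\varepsilon(x)\cdot\Phi(\abs{x})$ (with $\varepsilon(x)$ the coordinate signs) stays uniformly continuous because the scalar coordinate maps vanish continuously at $0$ and because changing signs is an isometric symmetry of both lattices. On the positive cones the map is assembled from a decomposition of a vector according to the dyadic scales of its coordinates, applying the classical scalar Mazur power on each scale; the lattice norm enters only through the two disjoint-block estimates above. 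I would check that the forward map is uniformly continuous directly from the upper ($\ell_1$) estimate, and that the inverse is uniformly continuous from the lower ($\ell_q$) estimate, the $q$-concavity being precisely what converts a small perturbation in $\ell_1(\Ga)$ back into a small perturbation in $X$.

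The hard part is this inverse estimate. Since $X$ carries no non-trivial convexity (it is only $1$-convex, like $\ell_1$ itself), one cannot simply use a coordinatewise power map: the naive Mazur map does not send $B_X$ into $B_{\ell_1(\Ga)}$ with a controlled inverse, and the $\ell_1$-versus-$\ell_q$ gap on disjoint blocks must be absorbed by the decomposition across scales. Quantifying the interaction between distinct scales, and showing it is summable against the lower $q$-estimate, is the technical heart of the argument and the step I expect to require the most care.

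Finally, for the non-separable case I would observe that nothing above depends on $\card\Ga$: the map is defined purely through the lattice operations and the scalar Mazur powers, so it commutes with the canonical lattice embeddings $X_S\hookrightarrow X$ arising from countable subsets $S\subset\Ga$, and every modulus produced depends only on $q$, on $C$, and on the unconditionality constant. Any pair of points of $B_X$ has countable support and hence lies in such a separable sublattice $X_S$, on which the estimates are already established with constants independent of $S$; consequently the global map is a uniform homeomorphism onto $B_{\ell_1(\Ga)}$ with the same moduli. This is the precise sense in which the non-separable proof is \emph{essentially the same} as the separable one.
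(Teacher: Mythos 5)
There is a genuine gap: the central object of the proof is never actually constructed. Your outline replaces the heart of the argument with a placeholder --- ``a decomposition according to the dyadic scales of the coordinates, applying the classical scalar Mazur power on each scale'' is not a definition of a map, and you yourself identify the inverse estimate as ``the technical heart'' that you ``expect to require the most care'' without carrying it out. Two concrete problems. First, nothing in your sketch shows that the proposed map is a bijection between $B_{\ell_1(\Ga)}^+$ and $B_X^+$: fixed powers applied scale by scale will not in general land on the sphere or ball of $X$, and it is exactly surjectivity that forces the normalization used in \cite{OS94}, where the map on a finitely supported positive $a\in S_{\ell_1}$ is $a\mapsto a^{t(a)}$ with a \emph{variable} exponent $t(a)$ chosen so that $\norm{a^{t(a)}}_X=1$; this choice is also what makes the family support- and sign-preserving and consistent across finite sets. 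Second, and more seriously, your hope that the inverse modulus can be extracted from $q$-concavity alone, with the inter-scale interaction ``summable against the lower $q$-estimate,'' is precisely the difficulty that the known proofs do not attack head-on: the first part of the proof of \cite[Theorem~2.1]{OS94} --- which the paper's argument invokes --- first passes, via a convexification and a generalized Mazur map, to an equivalent norm in which $X$ is uniformly convex and uniformly smooth, and only then obtains two-sided moduli of continuity, depending solely on the moduli of convexity and smoothness. Your plan omits this reduction entirely (indeed your remark that $X$ ``carries no non-trivial convexity'' shows you are trying to fight the lack of convexity directly), and offers no substitute for it; as written, the inverse estimate is an unexecuted idea, not a proof.

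On the positive side, your final paragraph is essentially correct and matches the paper's mechanism for the non-separable case: since the map is defined coordinate-intrinsically, it is consistent with the canonical sublattice embeddings, and its moduli do not depend on the supporting set, so the general case follows from the finitely supported one by density --- this is exactly properties (ii) and (iii) of the family $\{F_A\}$ in the paper's sketch together with the extension step. But that mechanism presupposes a construction on finitely supported vectors with uniform two-sided moduli, and that is precisely the part your proposal leaves unproven. To repair the argument, follow the paper: reduce to $X$ uniformly convex and uniformly smooth with a $1$-unconditional basis as in \cite{OS94}, use the variable-exponent power map there, and then your consistency-plus-density argument goes through verbatim.
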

\begin{proof}[Sketch of the proof]
As in the first part of the proof of \cite[Theorem~2.1]{OS94} we deduce that we can assume that $X$ is uniformly convex and uniformly smooth, and that $\{e_\ga\}_{\ga\in\Ga}$ is $1$-unconditional.
It is easy to see (cf. \cite[Proposition~2.9]{OS94}) that if $F\colon S_X\to S_Y$ is a uniform homeomorphism between the spheres of two Banach spaces $X$ and $Y$,
then the homogenous extension $\tilde F\colon B_X\to B_Y$, $\tilde F(x)=\norm xF(x/\norm x)$ for $x\neq0$ and $\tilde F(0)=0$ is a uniform homeomorphism between $B_X$ and $B_Y$.
Thus it is enough to find a uniform homeomorphism between the spheres of $X$ and $\ell_1(\Ga)$.

For a finite $A\subset\Ga $ we set $X_A=\spn\{e_\ga\setsep\ga\in A\}\subset X$.
From the proof of \cite[Theorem~2.1]{OS94} we can now deduce the following in the case that $\card\Ga$ is arbitrary:
For each finite $A\subset\Ga$ there is a uniform homeomorphism $F_A\colon S_{\ell_1(A)}\to S_{X_A}$ (here we understand the space $\ell_1(A)$ to be a subspace of $\ell_1(\Gamma)$) with the following properties:
\begin{enumerate}[(i)]
\item The modulus of uniform continuity of $F_A$ and $F_A^{-1}$ depends only on the modulus of uniform convexity and the modulus of smoothness of~$X$.
\item $F_A$ is support preserving and preserves the signs of the coefficients.
\item The family $\{F_A\colon\text{$A\subset\Ga$ finite}\}$ is consistent, meaning that for $A\subset B$ and $x\in \ell_1(A)$ it follows that $F_B(x)=F_A(x)$.
\end{enumerate}
From (iii) we deduce that the mapping $F\colon S_{\ell_1(\Ga)}\cap c_{00}(\Ga)\to S_X\cap c_{00}(\Ga)$ defined by $F(x)=F_A(x)$ for $\supp x\subset A$ is well defined.
(ii) implies that $F$ is bijective and from (i) we obtain that $F$ and $F^{-1}$ are uniformly continuous.
Thus $F$ extends to a uniform homeomorphism between the spheres of $\ell_1(\Ga)$ and $X$.
\end{proof}

Fouad Chaatit proved the following result (cf. also \cite[Theorem~9.7]{BenLin}):
\begin{theorem}[{\cite[Theorem 2.1]{Ch}}]\label{t:cotype_lat-embed_l1}
Let $X$ be an infinite-dimensional Banach lattice of a non-trivial cotype with a weak unit.
Then $B_X$ is uniformly homeomorphic to $B_{\ell_1(\Ga)}$ for some set $\Ga$.
\end{theorem}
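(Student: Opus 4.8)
The plan is to follow the template of the proof of Theorem~\ref{t:cotype_basis-embed_l1}, replacing the finite-dimensional subspaces spanned by basis vectors with finite-dimensional sublattices generated by finite measurable partitions, and to produce in the end a uniform homeomorphism of $S_X$ onto $S_{\ell_1(\Ga)}$. By the radial extension argument (\cite[Proposition~2.9]{OS94}, cf. the proof of Theorem~\ref{t:cotype_basis-embed_l1}) it suffices to construct a uniform homeomorphism between the spheres, and the homogeneous extension then gives the asserted uniform homeomorphism of the balls.

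First I would carry out the structural reductions. Since $X$ has a non-trivial cotype it contains no $\ell_\infty^n$ uniformly, hence $X$ is order continuous and, after an equivalent lattice renorming, is $q$-concave for some $q<\infty$ (see \cite{BenLin}). Order continuity together with the weak unit yields a Köthe function space representation of $X$ over a (localisable) measure space $(\Om,\mu)$ in which the weak unit corresponds to a strictly positive function and, crucially, the simple functions (those constant on the pieces of a finite partition of $\Om$ into sets of finite positive measure) are dense; moreover these simple functions carry both the $X$-norm and the $L_1(\mu)$-norm. As in \cite{OS94}, the $q$-concavity is what provides the quantitative control (the analog of the moduli of uniform convexity and smoothness in the basis case) that will make the forthcoming moduli of continuity independent of the finite-dimensional piece.

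Next comes the finite-dimensional core. I would order the finite measurable partitions $\pi$ of $\Om$ by refinement, obtaining a directed set whose associated sublattices $X_\pi$ of $\abs\pi$-dimensional functions constant on the pieces of $\pi$ have dense union. Writing $L_1^\pi$ for the same finite-dimensional space equipped with the $L_1(\mu)$-norm (a weighted $\ell_1^{\abs\pi}$), I would construct uniform homeomorphisms $F_\pi\colon S_{L_1^\pi}\to S_{X_\pi}$ that are support- and sign-preserving, whose moduli of continuity for $F_\pi$ and $F_\pi^{-1}$ depend only on the concavity data of $X$, and that are consistent under refinement, i.e. $F_{\pi'}\restr{S_{L_1^\pi}}=F_\pi$ whenever $\pi'$ refines $\pi$, the inclusions $L_1^\pi\subset L_1^{\pi'}$ and $X_\pi\subset X_{\pi'}$ being the canonical isometric ones given by subdividing pieces. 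By consistency these maps glue to a single map on the dense set of simple functions, and by the uniform moduli both it and its inverse are uniformly continuous and so extend to a uniform homeomorphism $S_{L_1(\mu)}\to S_X$. To pass from $L_1(\mu)$ to a discrete index set I would invoke the classical Mazur map (\cite{M29}, \cite[Theorem~9.1]{BenLin}): $S_{L_1(\mu)}$ is uniformly homeomorphic to $S_{L_2(\mu)}$, which, as the sphere of a Hilbert space of density $\dens X$, is isometric to $S_{\ell_2(\Ga)}$ with $\card\Ga=\dens X$, and this is in turn uniformly homeomorphic to $S_{\ell_1(\Ga)}$ by the Mazur map again. Composing yields the uniform homeomorphism $S_X\to S_{\ell_1(\Ga)}$.

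The main obstacle is the finite-dimensional construction together with its two quantitative demands. Building even a single uniform homeomorphism between the sphere of a finite-dimensional lattice and that of a weighted $\ell_1^n$ is the genuine \emph{Mazur map for a lattice norm} content, and one must bound its moduli solely in terms of the $q$-concavity constant so that the bound survives the passage to the limit. Harder still is the coherence under refinement: subdividing a piece of the partition must commute with the map, which forces $F_\pi$ to be defined by a coherent rule (the lattice analog of the coordinatewise construction of \cite{OS94}) rather than ad hoc, and verifying that the uniform moduli are genuinely independent of the refinement level is precisely where the concavity of $X$ must be exploited most carefully.
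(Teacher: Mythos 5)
You should first note the point of comparison: the paper does not prove Theorem~\ref{t:cotype_lat-embed_l1} at all --- it is quoted from Chaatit \cite{Ch} (cf. \cite[Theorem~9.7]{BenLin}) --- so your proposal has to be measured against the published proof. Your reductions agree with that proof and are correct: non-trivial cotype excludes $c_0$, so the lattice is order continuous and, by Maurey's theorem, $q$-concave for some $q<\infty$ after an equivalent lattice renorming; order continuity together with the weak unit yields the Köthe function space representation over a probability space with $L_\infty(\mu)\subset X\subset L_1(\mu)$ and the simple functions dense; the homogeneous extension of a uniform homeomorphism of spheres (as in the sketch of Theorem~\ref{t:cotype_basis-embed_l1}) handles the passage from spheres to balls; and the discretization $S_{L_1(\mu)}\to S_{L_2(\mu)}=S_{\ell_2(\Ga)}\to S_{\ell_1(\Ga)}$ via two Mazur maps (\cite{M29}, \cite[Theorem~9.1]{BenLin}) is a standard and correct way to trade the measure for a discrete index set.

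The genuine gap is that the entire analytic content of the theorem --- a sign- and support-preserving uniform homeomorphism $F\colon S_{L_1(\mu)}\to S_X$ whose moduli of continuity depend only on the concavity data of $X$ --- is precisely what you declare to be ``the main obstacle'' and never construct; as written, your argument reduces Chaatit's theorem to its own core step. Moreover, the partition-indexed directed system is a detour that manufactures the very consistency problem you then single out as the hardest point. In \cite{OS94} and \cite{Ch} (see also \cite[Chapter~9]{BenLin}) the map is given by one coherent global formula rather than by gluing finite-dimensional pieces: after a convexification/renorming step one may assume $X$ uniformly convex and uniformly smooth, and then for $f\in S_{L_1(\mu)}$, $f\ge0$, the Lozanovskii-type factorization $f=g\cdot h$ with $g\in S_X$, $g\ge0$, and $h$ a nonnegative element of the sphere of the Köthe dual is (essentially) unique, and one sets $F(f)=\mathrm{sign}(f)\,g$ in general. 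Support and sign preservation and compatibility with any refinement of partitions are then automatic, and all the work is concentrated where it belongs: in the quantitative claim that $F$ and $F^{-1}$ are uniformly continuous, which is exactly where the $q$-concavity enters. Until you supply that construction and those estimates (or import them wholesale from \cite{Ch}), the proposal is an accurate map of the proof's territory, not a proof.
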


As a corollary of the above theorems we obtain an answer to a problem of Christian Avart, Péter Komjáth, and V.~Rödl in \cite{AKR13}:
\begin{corollary}\label{c:rpc-Lp,lattice}
Let $X$ be an infinite-dimensional subspace of
\begin{enumerate}[(a)]
\item $L_p(\mu)$ for some measure $\mu$ and $1\le p<\infty$, or
\item a Banach lattice of a non-trivial cotype with a (long) unconditional basis or a weak unit.
\end{enumerate}
Let $\dens X=\om_{\al+n}$, where $\al$ is $0$ or a limit ordinal and $n\in\N_0$.
Then $\om_\al=\rpc B_X\le\rpc X\le\om_{\al+1}$.

Moreover, $\rpc X=\rpc B_X=\om_\al$ in the following cases:
\begin{itemize}
\item (a) holds with $p\le2$;
\item $n=0$;
\item $\om_\al$ is a regular cardinal (in particular $\al=0$).
\end{itemize}
\end{corollary}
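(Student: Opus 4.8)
The plan is to trap $X$ between itself and a copy of $\ell_1(\Ga)$ with $\card\Ga=\om_{\al+n}$, transport the value $\rpc\ell_1(\om_{\al+n})=\om_\al$ from Theorem~\ref{t:rpc-l1} down to $B_X$, and finally climb back up from $B_X$ to $X$ via Theorem~\ref{t:rpc-ball}. The lower bound is the easy half: in every case $X$ inherits a non-trivial cotype, so if $\al$ is a limit ordinal Theorem~\ref{t:cotype-rpc-lim} gives $\rpc B_X\ge\om_\al$ (using $\dens X=\om_{\al+n}\ge\om_\al$), while if $\al=0$ the remark following Fact~\ref{f:rpc<=dens} gives $\rpc B_X\ge\om=\om_\al$ because $X$ is infinite-dimensional. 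By Fact~\ref{f:rpc_embedded}(b) this already yields $\rpc X\ge\rpc B_X\ge\om_\al$.

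For the upper bound I would first replace $X$ by a substructure $Z$ with $X\subset Z$, $\dens Z=\om_{\al+n}$, that falls directly under Theorem~\ref{t:cotype_basis-embed_l1} or Theorem~\ref{t:cotype_lat-embed_l1}, so that $B_Z$ is uniformly homeomorphic to $B_{\ell_1(\Ga)}$ for some $\Ga$; since a uniform homeomorphism preserves density and $\dens B_Z=\dens Z$, $\dens B_{\ell_1(\Ga)}=\card\Ga$, necessarily $\card\Ga=\om_{\al+n}$. In case (b) with a long unconditional basis, every vector has countable support, so a dense subset of $X$ of cardinality $\om_{\al+n}$ meets only $\om_{\al+n}$ basis indices; taking $Z$ to be the closed span of those basis vectors gives an infinite-dimensional space with a long unconditional basis and the inherited non-trivial cotype. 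In case (b) with a weak unit $u$, I would take $Z$ to be the closed sublattice generated by $u$ together with a dense subset of $X$; its density is again $\om_{\al+n}$ and $u$ remains a weak unit, so Theorem~\ref{t:cotype_lat-embed_l1} applies. In case (a) a subspace of $L_p(\mu)$ lives in $Z=L_p(\mc A)$ for a $\sigma$-finite sub-$\sigma$-algebra $\mc A$ generated by $\om_{\al+n}$ sets, and this $Z$ is an infinite-dimensional Banach lattice of non-trivial cotype with a weak unit.

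Granting the reduction, the upper bound is the chain
\[
\rpc B_X\le\rpc B_Z=\rpc B_{\ell_1(\Ga)}\le\rpc\ell_1(\Ga)=\om_\al,
\]
where the first inequality is Fact~\ref{f:rpc_embedded}(b) applied to $B_X\subset B_Z$, the equality is Fact~\ref{f:rpc_embedded}(a), the second inequality is Fact~\ref{f:rpc_embedded}(b) applied to $B_{\ell_1(\Ga)}\subset\ell_1(\Ga)$, and the last equality is Theorem~\ref{t:rpc-l1}. Combined with the lower bound this gives $\rpc B_X=\om_\al$. Since $\om_{\al+1}=(\om_\al)^+$ is regular and $\rpc B_X=\om_\al\le\om_{\al+1}$, Theorem~\ref{t:rpc-ball} yields $\rpc X\le\om_{\al+1}$, so $\om_\al=\rpc B_X\le\rpc X\le\om_{\al+1}$.

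For the three ``Moreover'' cases I would argue as follows. If $\om_\al$ is regular then $\rpc B_X=\om_\al$ is itself regular, so Theorem~\ref{t:rpc-ball} sharpens the bound to $\rpc X=\rpc B_X=\om_\al$ (this covers $\al=0$). If $n=0$ then $\dens X=\om_\al$ and Fact~\ref{f:rpc<=dens} gives $\rpc X\le\om_\al$, matching the lower bound. If (a) holds with $p\le2$, I would use the classical fact that $L_p(\mu)$, $1\le p\le2$, embeds uniformly into a Hilbert space; hence so does $X$, and Theorem~\ref{t:embed_H-rpc} gives $\rpc X\le\om_\al$, again matching the lower bound. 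The main obstacle is the reduction step of the second paragraph: one must exhibit $Z$ inside the correct class (a lattice with a weak unit, or a space with a long unconditional basis) while pinning its density to exactly $\om_{\al+n}$, so that the target index set has cardinality $\om_{\al+n}$ and Theorem~\ref{t:rpc-l1} returns precisely $\om_\al$; everything else is bookkeeping with Facts~\ref{f:rpc_embedded} and~\ref{f:rpc<=dens}.
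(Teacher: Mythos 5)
Your overall architecture matches the paper's proof: the lower bound via Theorem~\ref{t:cotype-rpc-lim} (supplemented, when $\al=0$, by the remark that $\rpc B_X\ge\om$ for infinite-dimensional spaces -- here you are in fact more careful than the paper's one-line gloss), the upper bound for $B_X$ by transporting $\rpc\ell_1(\om_{\al+n})=\om_\al$ from Theorem~\ref{t:rpc-l1} through Fact~\ref{f:rpc_embedded}, the passage from $B_X$ to $X$ via Theorem~\ref{t:rpc-ball} applied to the regular cardinal $\om_{\al+1}$ (resp.\ $\om_\al$ when it is regular), Fact~\ref{f:rpc<=dens} for $n=0$, and Theorem~\ref{t:embed_H-rpc} for case (a) with $p\le2$. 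The one structural difference is the direction of the cardinality reduction: the paper embeds $B_X$ into $B_{\ell_1(\Ga)}$ for a possibly huge $\Ga$ (applying Theorems~\ref{t:cotype_basis-embed_l1} and~\ref{t:cotype_lat-embed_l1} to the ambient space) and only afterwards cuts $\Ga$ down by taking $\cspan$ of the embedded image, using that every element of $\ell_1(\Ga)$ has countable support; you instead shrink the ambient space to a substructure $Z\supset X$ of density $\om_{\al+n}$ before invoking the ball theorems. In case (b) your reduction is correct: both the span of the basis vectors meeting the supports of a dense set and the closed sublattice generated by the weak unit together with a dense set do verify the hypotheses of Theorems~\ref{t:cotype_basis-embed_l1}, resp.~\ref{t:cotype_lat-embed_l1}, with the right density.

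In case (a), however, your reduction has a genuine gap: the claim that $X$ lives in $L_p(\mc A)$ with the restricted measure $\sigma$-finite is false once $\dens X>\om$. The supports of a dense set of cardinality $\om_{\al+n}$ give a union of $\om_{\al+n}$ many $\sigma$-finite sets, which need not be $\sigma$-finite; e.g.\ for $X=\ell_p(\om_1)$ inside $L_p$ of the counting measure on $\om_1$, any $Z=L_p(\mc A)$ containing $X$ sits over a non-$\sigma$-finite measure and has no weak unit. Nor can you always fall back on case (b): for $p=1$ the space $Z$ can be $\bigl(\sum_{\ga<\om_1}L_1[0,1]\bigr)_{\ell_1}$, which has neither a weak unit nor a (long) unconditional basis, since $L_1[0,1]$ does not embed into a space with an unconditional basis. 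Thus for $p>2$ and $n\ge1$ your proof leaves the key estimate $\rpc B_X\le\om_\al$ unestablished (for $p\le2$ your Hilbert-space route via Theorem~\ref{t:embed_H-rpc} independently covers the statement). The paper's repair for case (a) is the Mazur mapping: $B_{L_p(\mu)}$ is uniformly homeomorphic to $B_{L_2(\mu)}$, which upon identifying $L_2(\mu)$ with $\ell_2(\Ga_0)$ is uniformly homeomorphic to $B_{\ell_1(\Ga_0)}$; composing, $B_X$ embeds uniformly into $B_{\ell_1(\Ga_0)}$, and the $\cspan$/countable-support argument then pins the index set down to cardinality $\om_{\al+n}$, after which your concluding chain of inequalities runs verbatim.
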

\begin{proof}
The case $n=0$ follows from Theorem~\ref{t:cotype-rpc-lim} when $\al>0$, resp. Fact~\ref{f:rpc<=dens} when $\al=0$.
Now assume that $n>0$.
The inequalities $\om_\al\le\rpc B_X\le\rpc X$ follow again from Theorem~\ref{t:cotype-rpc-lim}.
If (a) holds with $p\le2$, then $\rpc X\le\om_\al$ by Theorem~\ref{t:embed_H-rpc} and \cite[Example on p.~191]{BenLin}.
In the other cases $B_X$ embeds uniformly into $B_{\ell_1(\Ga)}$ for some set $\Ga$:
In the case (a) we use the classical Mazur mapping (\cite{M29}, see \cite[Theorem~9.1]{BenLin}),
in the case (b) we use Theorem~\ref{t:cotype_basis-embed_l1}, resp.~\ref{t:cotype_lat-embed_l1}.
By considering $\cspan$ of the embedded image we may assume that $\card\Ga=\dens X=\om_{\al+n}$.
Using Theorem~\ref{t:rpc-l1} and Fact~\ref{f:rpc_embedded} we obtain that $\rpc B_X\le\om_\al$.
Theorem~\ref{t:rpc-ball} now implies that $\rpc X\le\om_\al$ in case that $\om_\al$ is regular, resp. $\rpc X\le\om_{\al+1}$ if $\om_\al$ is singular.
\end{proof}

We remark that it is consistent with ZFC that none of the $\om_\al$, $\al$ a limit ordinal, is actually regular, see \cite[p.~33]{Jech}.

In combination with Theorem~\ref{t:embedc0-char} we obtain the following corollary, which contains the solution to a problem of Gilles Godefroy, Gilles Lancien, and Václav Zizler in \cite{GLZ}:
\begin{corollary}\label{c:Lp,lat-embed}
Let $X$ be a subspace of
\begin{enumerate}[(a)]
\item $L_p(\mu)$ for some measure $\mu$ and $1\le p<\infty$, or
\item a Banach lattice of a non-trivial cotype with a (long) unconditional basis or a weak unit.
\end{enumerate}
If $\dens X<\om_\om$, then $X$ admits a bi-Lipschitz embedding into some $c_0(\Ga)$.
If $\dens X\ge\om_\om$, then $X$ does not admit a coarse or uniform embedding into any $c_0(\Ga)$.
\end{corollary}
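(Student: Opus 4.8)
The plan is to read this corollary off directly from the three results already assembled, splitting along the two density regimes in the statement. Since the hypotheses on $X$ coincide verbatim with those of Corollary~\ref{c:rpc-Lp,lattice}, the first move is to translate the conclusions about $\rpc X$ furnished there into embedding statements through the equivalences of Theorem~\ref{t:embedc0-char}.

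For the first assertion, suppose $\dens X<\om_\om$. The finite-dimensional case is trivial, since any finite-dimensional normed space is linearly isomorphic to a subspace of $c_0$ and hence bi-Lipschitz embeds into it; so I may assume $X$ is infinite-dimensional and write $\dens X=\om_m$ for some $m\in\N_0$. The key arithmetic observation is that in the decomposition $\om_m=\om_{\al+n}$ required by Corollary~\ref{c:rpc-Lp,lattice}, with $\al$ equal to $0$ or a limit ordinal and $n\in\N_0$, the finiteness of $m$ forces $\al=0$ and $n=m$. Hence $\om_\al=\om$ is regular, and the ``moreover'' clause of Corollary~\ref{c:rpc-Lp,lattice} yields $\rpc X=\om$. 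Condition~(i) of Theorem~\ref{t:embedc0-char} is then satisfied, and the equivalence (i)$\Leftrightarrow$(vi) gives a bi-Lipschitz embedding of $X$ into some $c_0(\Ga)$.

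For the second assertion, suppose $\dens X\ge\om_\om$. Here I would first record that $X$ is of non-trivial cotype: a subspace inherits the cotype of its superspace, and both $L_p(\mu)$ for $1\le p<\infty$ (which has cotype $\max\{2,p\}$) and the Banach lattices in~(b) are of non-trivial cotype. With this, $X$ falls under the hypotheses of Corollary~\ref{c:cotype-no_emb}, whose conclusion is exactly that a space of non-trivial cotype and density at least $\om_\om$ admits neither a uniform nor a coarse embedding into any $c_0(\Ga)$.

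No step here is genuinely hard, since the substance resides in the earlier results; the only points requiring care are the cardinal bookkeeping that pins down $\al=0$ (and thereby the regularity of $\om_\al$, which is precisely what is needed to upgrade the generic bound $\rpc X\le\om_{\al+1}$ to $\rpc X\le\om$) and the routine remark that the cotype hypothesis of Corollary~\ref{c:cotype-no_emb} passes to subspaces. I therefore expect the write-up to be very short, amounting essentially to a combination of Corollaries~\ref{c:rpc-Lp,lattice} and~\ref{c:cotype-no_emb} with Theorem~\ref{t:embedc0-char}.
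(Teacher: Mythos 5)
Your proof is correct and takes essentially the same route as the paper, which states this corollary as an immediate combination of Corollary~\ref{c:rpc-Lp,lattice} (with $\al=0$, $n=m$, so that the regularity of $\om$ triggers the ``moreover'' clause giving $\rpc X=\om$) and Theorem~\ref{t:embedc0-char} for the positive part, together with Corollary~\ref{c:cotype-no_emb} for the negative part. Your explicit handling of the finite-dimensional case (needed since Corollary~\ref{c:rpc-Lp,lattice} assumes infinite-dimensionality) and of the inheritance of non-trivial cotype by subspaces is exactly the bookkeeping the paper leaves implicit.
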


The next result follows by the same reasoning as in \cite[Theorem~4.2]{HS18}.
In view of Corollary~\ref{c:Lp,lat-embed} the density assumption is optimal.
\begin{theorem}
If $X$ is a Banach space with a (long) sub-symmetric basis and $\dens X\ge\om_\om$ which admits a coarse or uniform embedding into $c_0(\Ga)$,
then $X$ is linearly isomorphic to some $c_0(\Lambda)$.
\end{theorem}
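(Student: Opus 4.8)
The plan is to dispense with cotype as the organizing principle and argue directly through the fundamental function of the basis. Write $\{e_\ga\}_{\ga\in\Lambda}$ for the (long) sub-symmetric basis; after an equivalent renorming I would assume it is $1$-sub-symmetric and normalized, so that for any $n$ distinct indices $\ga_1,\dots,\ga_n$ and any signs the number $\lambda_n:=\norm{\sum_{i=1}^n\pm e_{\ga_i}}$ depends only on $n$. The whole argument then splits according to whether $\sup_n\lambda_n<\infty$ or $\lambda_n\to\infty$, and the claim is that the first alternative forces $X\cong c_0(\Lambda)$ while the second is incompatible with embeddability into any $c_0(\Ga)$.

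In the bounded case, say $\lambda_n\le M$ for all $n$, I would recover the $c_0(\Lambda)$ structure by Abel summation. Given a finitely supported $x=\sum_\ga a_\ga e_\ga$, use $1$-unconditionality to assume $a_\ga\ge0$, reindex the support so that $a_{\ga_1}\ge a_{\ga_2}\ge\dots\ge a_{\ga_m}\ge0$, and write $x=\sum_{k=1}^m(a_{\ga_k}-a_{\ga_{k+1}})\sum_{i=1}^k e_{\ga_i}$ with $a_{\ga_{m+1}}=0$; the triangle inequality together with $\norm{\sum_{i=1}^k e_{\ga_i}}=\lambda_k\le M$ gives $\norm x\le M\max_\ga\abs{a_\ga}$, while $1$-unconditionality gives the reverse bound $\norm x\ge\max_\ga\abs{a_\ga}$. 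Thus $\{e_\ga\}$ is $M$-equivalent to the canonical basis of $c_0(\Lambda)$, so $X$ is isomorphic to $c_0(\Lambda)$, the desired conclusion.

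For the unbounded case the plan is to re-run the Pelant--Rödl argument from the proof of Theorem~\ref{t:cotype-rpc-lim}, with the sub-symmetric structure supplying the lower bound previously obtained from cotype. Let $\mc V$ be any $1$-bounded $\de$-uniform covering of $X$ and set $x_\ga=\frac\de3 e_\ga$; these lie on $S(0,\frac\de3)$ and there are $\dens X\ge\om_\om$ of them. Defining $y_{A,i}$ and the colouring $f\colon[\Lambda]^n\to\mc V^{2^n}$ exactly as there, for disjoint $A,B\in[\Lambda]^n$ the vector $y_{A,i}-y_{B,i}=\frac\de3\sum_j\ve_i(j)(e_{\ga_j}-e_{\zeta_j})$ is a $\pm\frac\de3$-combination of $2n$ distinct basis vectors, so $\norm{y_{A,i}-y_{B,i}}=\frac\de3\lambda_{2n}$ for every $i$. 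Since $\lambda_{2n}\to\infty$ I would fix once and for all an $n$ with $\frac\de3\lambda_{2n}>1$; then no $1$-bounded set can contain both $y_{A,i}$ and $y_{B,i}$, whence $f(A)\neq f(B)$ for disjoint $A,B$ and Lemma~\ref{l:root} applies. Running the remainder of that proof (the estimate $\norm{y_{B,m}-y_{A\cup\{\ga\},m}}\le\norm{x_\zeta}+\norm{x_\ga}=\frac{2\de}3<\de$ is exactly what is needed there) produces, for every finite $\beta$, a point of $X$ lying in $\om_\beta$ members of $\mc V$; here the hypothesis $\dens X\ge\om_\om$ is used precisely to guarantee $\dens X\ge\om_{\beta+n-1}$ for all finite $\beta$. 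Hence $\ord\mc V\ge\sup_{\beta<\om}\om_\beta=\om_\om$, so $\rpc X\ge\om_\om$, contradicting $\rpc X\le\om$, which holds by Theorem~\ref{t:embedc0-char} since $X$ embeds coarsely or uniformly into some $c_0(\Ga)$.

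The main obstacle, and the reason one cannot simply quote Corollary~\ref{c:cotype-no_emb}, is that trivial cotype does \emph{not} by itself force $X\cong c_0(\Lambda)$: a sub-symmetric space whose fundamental function grows like $\log n$ has trivial cotype yet $\lambda_n\to\infty$. The resolution is exactly the observation above that for a sub-symmetric basis the quantity $\norm{\sum\pm e_{\ga_i}}$ is \emph{equal} to $\lambda_{2n}$ rather than merely bounded below by some $n^{1/q}$, so the combinatorial machinery of Theorem~\ref{t:cotype-rpc-lim} runs under the weaker hypothesis $\lambda_n\to\infty$ in place of non-trivial cotype. The remaining points need only care, not ingenuity: verifying that a long sub-symmetric basis really makes $\lambda_n$ well defined and sign-invariant (so that a $\pm1$-combination of $2n$ distinct vectors has norm genuinely equal to $\lambda_{2n}$), and checking, as in Theorem~\ref{t:cotype-rpc-lim}, that a single choice of $n$ suffices while $\beta$ ranges over all finite ordinals.
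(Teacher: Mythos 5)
Your proposal is correct and follows essentially the same route as the paper, which proves this theorem by deferring to the reasoning of \cite[Theorem~4.2]{HS18}: a dichotomy on the fundamental function $\lambda_n=\norm{\sum_{i=1}^n e_{\ga_i}}$ of the sub-symmetric basis, where boundedness yields equivalence to the canonical basis of $c_0(\Lambda)$ (your Abel summation argument), and unboundedness substitutes the exact lower estimate $\norm{y_{A,i}-y_{B,i}}=\frac\de3\lambda_{2n}$ for the cotype estimate in the proof of Theorem~\ref{t:cotype-rpc-lim}, forcing $\rpc X\ge\om_\om$ and contradicting Theorem~\ref{t:embedc0-char}. Your reconstruction, including the observation that Corollary~\ref{c:cotype-no_emb} cannot be quoted directly because trivial cotype does not preclude $\lambda_n\to\infty$, matches the intended argument.
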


By combining Corollary~\ref{c:Lp,lat-embed}, \cite[Theorem~7.63]{HJ}, and \cite[Theorem~9]{J23} we obtain the following corollary
(note that any Lipschitz function from a subset of a metric space can be extended to the whole space with the same Lipschitz constant):
\begin{corollary}
Let $X$ be a subspace of $L_p(\mu)$ for some measure $\mu$ and $1<p<\infty$, resp. of some super-reflexive Banach lattice with a (long) unconditional basis or a weak unit, with $\dens X<\om_\om$.
Then there is a bi-Lipschitz embedding $\Phi\colon X\to c_0(\Ga)$ such that the component functions $f_\ga\comp\Phi$ are $C^1$-smooth, where $f_\ga$ are the canonical coordinate functionals on $c_0(\Ga)$.
\end{corollary}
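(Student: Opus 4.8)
The plan is to begin with the bi-Lipschitz embedding supplied by Corollary~\ref{c:Lp,lat-embed} and then smooth its coordinate functions without destroying either the bi-Lipschitz estimates or membership in $c_0(\Ga)$. Since $\dens X<\om_\om$ and $X$ is a subspace of one of the spaces in (a) or (b) (note that a super-reflexive Banach lattice has a non-trivial cotype, so the hypotheses of Corollary~\ref{c:Lp,lat-embed} are met), there is a bi-Lipschitz embedding $\Phi_0\colon X\to c_0(\Ga)$; I write $g_\ga=f_\ga\comp\Phi_0$ and fix $0<a\le b$ with $a\norm{x-y}\le\norm{\Phi_0(x)-\Phi_0(y)}\le b\norm{x-y}$, so that each $g_\ga$ is a real $b$-Lipschitz function and $(g_\ga(x))_\ga\in c_0(\Ga)$ for every $x$. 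For the smoothness input, note that $L_p(\mu)$ with $1<p<\infty$ and any super-reflexive Banach lattice are super-reflexive, hence so is the subspace $X$; by \cite[Theorem~7.63]{HJ} it follows that $X$ admits a $C^1$-smooth Lipschitz bump function (equivalently, an equivalent uniformly, hence $C^1$-, smooth norm).

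The smoothing is then delegated to \cite[Theorem~9]{J23}, which on a space with a $C^1$-smooth Lipschitz bump allows one to approximate any real Lipschitz function uniformly, and with a controlled Lipschitz constant, by $C^1$-smooth Lipschitz functions; whenever a function is initially defined only on a subset of $X$ one first extends it to all of $X$ with the same Lipschitz constant (the McShane extension mentioned in the statement) and only then smooths it. A single uniform approximation degrades the lower estimate at small scales, so I smooth at a whole range of scales: for each $n\in\N$ I choose a $C^1$-smooth Lipschitz $g_\ga^{(n)}$ with Lipschitz constant at most $b'$ (a constant depending only on $b$) and $\norm{g_\ga^{(n)}-g_\ga}_\infty\le2^{-n}$, and I put $\Phi=(g_\ga^{(n)})_{(\ga,n)\in\Ga\times\N}$. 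The index set $\Ga\times\N$ still has cardinality at most $\dens X$, and each coordinate $f_{(\ga,n)}\comp\Phi=g_\ga^{(n)}$ is $C^1$-smooth.

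It remains to verify the three properties of $\Phi$. The upper estimate is immediate, since every $g_\ga^{(n)}$ is $b'$-Lipschitz. For the lower estimate fix $x\neq y$, choose $n$ with $2^{-n}\le a\norm{x-y}/4$, and let $\ga$ satisfy $\abs{g_\ga(x)-g_\ga(y)}\ge a\norm{x-y}$; then $\abs{g_\ga^{(n)}(x)-g_\ga^{(n)}(y)}\ge a\norm{x-y}-2\cdot2^{-n}\ge(a/2)\norm{x-y}$, so $\Phi$ is bi-Lipschitz. The genuinely delicate point --- and the step I expect to be the main obstacle --- is that $\Phi(x)$ must lie in $c_0(\Ga\times\N)$: the approximants $g_\ga^{(n)}$ shadow $g_\ga$ and hence do not decay as $n\to\infty$, so a naive coordinatewise, scale-unaware smoothing can easily fail this requirement. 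The correct remedy is to smooth not the coordinates of $\Phi_0$ individually but the bump functions attached to the point-finite uniform coverings of $X$ underlying the embedding (these exist because $\rpc X\le\om$), so that the point-finiteness present at each scale is inherited by $\Phi$ and forces $\Phi(x)\in c_0$. This is exactly the mechanism encapsulated in \cite[Theorem~9]{J23}, so that in the end the argument reduces to checking its hypotheses: the $C^1$-smooth Lipschitz bump provided by \cite[Theorem~7.63]{HJ} and the bi-Lipschitz embedding into $c_0(\Ga)$ provided by Corollary~\ref{c:Lp,lat-embed}.
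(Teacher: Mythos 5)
Your overall route is the paper's: the proof there consists exactly of combining Corollary~\ref{c:Lp,lat-embed}, \cite[Theorem~7.63]{HJ}, and \cite[Theorem~9]{J23}, together with the McShane extension remark, and you assemble the same three ingredients. You also correctly diagnose where a naive argument dies: in your two-parameter construction $\Phi=(g_\ga^{(n)})_{(\ga,n)}$, any coordinate $\ga_0$ with $g_{\ga_0}(x)\neq0$ gives $\abs{g_{\ga_0}^{(n)}(x)}\ge\abs{g_{\ga_0}(x)}-2^{-n}$, which stays bounded away from zero for all large $n$, so $\Phi(x)\notin c_0(\Ga\times\N)$; your bi-Lipschitz estimates are fine but irrelevant once the range condition fails.

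The genuine gap sits in the repair, and it is compounded by a misassignment of roles to the two cited theorems. As you yourself describe it in your second paragraph, \cite[Theorem~9]{J23} is a pure approximation result: on a super-reflexive space, every $L$-Lipschitz real function can be uniformly approximated by $C^1$-smooth functions that are $CL$-Lipschitz. It contains no point-finite coverings, no sup-partitions of unity, and no $c_0$-valued construction, so your closing claim that the covering-based mechanism ``is exactly encapsulated'' in it is inconsistent with your own description of that theorem and does not hold: the mechanism in question --- converting a bi-Lipschitz embedding into $c_0(\Ga)$ plus Lipschitz-constant-controlled $C^1$-smooth approximation of Lipschitz functions (applied, after McShane extension, to functions defined on subsets; this is what the paper's parenthetical remark is for) into a bi-Lipschitz embedding with $C^1$-smooth component functions --- is the content of \cite[Theorem~7.63]{HJ}, a theorem of the smooth-approximation chapter of that book. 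You instead spend \cite[Theorem~7.63]{HJ} on producing a $C^1$-smooth Lipschitz bump; the existence of such a bump on $X$ is true (via Enflo's uniformly smooth renorming of super-reflexive spaces), but that is not what the cited theorem provides, and --- crucially --- in the non-separable setting a smooth Lipschitz bump alone is not known to yield Lipschitz-controlled smooth approximations of Lipschitz functions, which is precisely why \cite[Theorem~9]{J23} is needed as a separate input. So your proposal identifies the right ingredients and the right obstacle, but the decisive step that overcomes the obstacle is neither carried out nor correctly sourced: as written, the argument terminates in a citation that does not prove it.
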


We do not know whether in the corollary above in the case $p\ge2$ the component functions can be of higher smoothness.
Invoking \cite[Theorems~7.79 and 7.86]{HJ} we obtain a corollary on approximation of Lipschitz mappings:
\begin{corollary}\label{c:Lip-ap}
Let $X$ be a subspace of $L_p(\mu)$ for some measure $\mu$ and $1<p<\infty$, resp. of some super-reflexive Banach lattice with a (long) unconditional basis or a weak unit, with $\dens X<\om_\om$,
and let $Y$ be a Banach space that is an absolute Lipschitz retract.
Then there is a constant $C>0$ such that for any open $\Om\subset X$, any $L$-Lipschitz mapping $f\colon\Om\to Y$, and any continuous function $\ve\colon\Om\to\R^+$
there is a $CL$-Lipschitz mapping $g\in C^1(\Om;Y)$ for which $\norm{f(x)-g(x)}<\ve(x)$ for all $x\in\Om$.
\end{corollary}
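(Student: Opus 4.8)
The plan is to reduce the statement to the smooth bi-Lipschitz embedding produced in the previous corollary and then feed this into the general approximation machinery of \cite[Theorems~7.79 and 7.86]{HJ}. First I would invoke the previous corollary: the hypotheses on $X$ here are identical to those there, so under the present assumptions (a subspace of $L_p(\mu)$, $1<p<\infty$, or of a super-reflexive Banach lattice with a long unconditional basis or a weak unit, with $\dens X<\om_\om$) there is a bi-Lipschitz embedding $\Phi\colon X\to c_0(\Ga)$ whose component functions $f_\ga\comp\Phi$ are $C^1$-smooth. This one structural fact about the domain is exactly what the abstract approximation theorems require.

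Next I would spell out the roles of the two cited results. \cite[Theorem~7.79]{HJ} converts the existence of a coordinatewise $C^1$-smooth bi-Lipschitz embedding into $c_0(\Ga)$ into the existence of adequate $C^1$-smooth Lipschitz partitions of unity on $X$, with the relevant Lipschitz constants controlled solely by the bi-Lipschitz constant of $\Phi$ and the moduli of the derivatives of the functions $f_\ga\comp\Phi$; in particular these data depend only on $X$ and not on the mapping to be approximated. \cite[Theorem~7.86]{HJ} then combines such partitions with the absolute Lipschitz retract property of $Y$: for a given $L$-Lipschitz $f\colon\Om\to Y$ one forms a $C^1$-smooth mapping $g$ by averaging suitably chosen values of $f$ (extending $f$ Lipschitz-ly where necessary, which the retract hypothesis permits) against the partition. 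The average lands in the convex set $Y$, the smoothness of $g$ comes from that of the partition, and the retract structure keeps the Lipschitz constant of $g$ bounded by $CL$ with $C$ depending only on $X$ and $Y$.

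With these ingredients the proof is a verification that the hypotheses of \cite[Theorems~7.79 and 7.86]{HJ} hold: $Y$ is an absolute Lipschitz retract by assumption, $X$ carries the required smooth embedding by the previous corollary, $\Om$ is open, $f$ is $L$-Lipschitz, and $\ve\colon\Om\to\R^+$ is continuous; the conclusion is precisely a $g\in C^1(\Om;Y)$ with $\norm{f(x)-g(x)}<\ve(x)$ for every $x\in\Om$ and Lipschitz constant at most $CL$.

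I expect the difficulty to be bookkeeping rather than genuinely new mathematics. The two points deserving care are that the mere $C^1$-smoothness of the coordinates $f_\ga\comp\Phi$ (rather than higher smoothness) suffices to run the construction of \cite[Theorem~7.79]{HJ}, and that the constant $C$ is truly uniform over all $\Om$, $f$, and $\ve$. This uniform control of the Lipschitz constant, which is exactly where the absolute Lipschitz retract hypothesis on $Y$ is used, is the single step requiring attention; the remainder is a direct application of the cited theorems.
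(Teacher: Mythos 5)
Your proposal coincides with the paper's proof, which is precisely the one-line combination of the preceding corollary (the bi-Lipschitz embedding $\Phi\colon X\to c_0(\Ga)$ with $C^1$-smooth component functions $f_\ga\comp\Phi$) with \cite[Theorems~7.79 and~7.86]{HJ}, the former supplying the smooth Lipschitz partition-type structure (in fact sup-partitions of unity, the $c_0(\Ga)$-valued analogue of partitions of unity) and the latter the approximation with the constant $C$ uniform over $\Om$, $f$, and $\ve$, using that $Y$ is an absolute Lipschitz retract. Your reading of where each hypothesis enters is accurate, so the proposal is correct and takes essentially the same route as the paper.
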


The above corollary has applications for Whitney-type extension theorems, see \cite{JZ}.

\begin{problem}
Let $X$ be a super-reflexive space, resp. a WCG space of a non-trivial cotype, with $\dens X<\om_\om$.
Does $X$ embed uniformly into $c_0(\Ga)$?
\end{problem}

Note that by the result of J.~Pelant $C([0,\om_1])$ does not embed uniformly into $c_0(\Ga)$, \cite{PHK06},
so additional assumptions on $X$ must be placed here in order to expect a positive answer to this problem.

It should be noted that the cardinal $\om_\om$ appears rather frequently in dealings with certain properties of the Banach space $c_0(\Ga)$.
For example, in \cite{GKL00} it is proved that if $X$ is a WCG space with $\dens X<\om_\om$,
then the assumption that $X$ is bi-Lipschitz isomorphic to (a subspace of) $c_0(\Ga)$ implies that it is linearly isomorphic to it.
The cardinality restriction is necessary and the result fails for $\dens X=\om_\om$, \cite{GLZ}.
Similarly, the non-separable Sobczyk theorem holds under the exact same assumptions.
More precisely, let $X=c_0(\Ga)$ be a subspace of a WCG space $Y$.
If $\card\Ga<\om_\om$, then $X$ is complemented.
For $\card\Ga=\om_\om$ one can find counterexamples.

We close our paper with another problem related to the implication (ix)$\Rightarrow$(vi) in Theorem~\ref{t:embedc0-char}:
\begin{problem}
Let $X$, $Y$ be Banach spaces such that $B_X$ admits a bi-Lipschitz embedding into $Y$.
Does $X$ admit a bi-Lipschitz embedding into $Y$?
\end{problem}

The answer is positive for $Y=c_0(\Ga)$ (Theorem~\ref{t:embedc0-char}) and also for $Y$ a Hilbert space.
Indeed, let $f\colon B_X\to Y$ be an embedding such that $f$ is $L$-Lipschitz and $f^{-1}$ is $K$-Lipschitz.
For $n\in\N$ define $f_n\colon B_X(0,n)\to Y$ by $f_n(x)=nf(\frac xn)$.
Then each $f_n$ is an embedding with $f_n$ being $L$-Lipschitz and $f_n^{-1}$ being $K$-Lipschitz.
Extend each $f_n$ to $g_n\colon X\to Y$ by setting $g_n(x)=0$ for $x\in X\setminus B_X(0,n)$.
Let $\mc U$ be a free ultrafilter on $\N$ and define $g\colon X\to(Y)_{\mc U}$ by $g(x)=\bigl(g_n(x)\bigr)_{\mc U}$.
Since $\mc U$ is free, it is not difficult to see that $g$ is a bi-Lipschitz embedding.
Since $(Y)_{\mc U}$ is isometric to a Hilbert space, \cite[Theorem~3.3]{H80}, the result follows.

Note also that the uniform version of the above problem has a negative answer:
While $B_{\ell_p}$ is uniformly homeomorphic to $B_{\ell_2}$ for every $1<p<\infty$ using the classical Mazur mapping,
$\ell_p$ does not embed uniformly into $\ell_2$ for $p>2$ (\cite[p.~194]{BenLin}).

\end{document}